\newcommand{\ud}[0]{\,\mathrm{d}}
\newcommand{\essinf}[0]{\operatornamewithlimits{ess\,inf}}
\newcommand{\diverg}[0]{\operatorname{div}}
\newcommand{\dist}[0]{\operatorname{dist}}
\newcommand{\proj}[0]{\mathbb{P}}
\newcommand{\eps}[0]{\varepsilon}
\newcommand{\rGamma}[0]{\reflectbox{\(\Gamma\)}}
\newcommand{\abs}[1]{|#1|}
\newcommand{\Babs}[1]{\Big|#1\Big|}
\newcommand{\Norm}[2]{\|#1\|_{#2}}
\newcommand{\BNorm}[2]{\Big\|#1\Big\|_{#2}}
\newcommand{\pair}[2]{\langle #1,#2 \rangle}
\newcommand{\Bpair}[2]{\Big\langle #1,#2 \Big\rangle}
\newcommand{\bddlin}[0]{\mathscr{L}}
\newcommand{\kernel}[0]{\mathsf{N}}
\newcommand{\range}[0]{\mathsf{R}}
\newcommand{\domain}[0]{\mathsf{D}}
\renewcommand{\Re}[0]{\operatorname{Re}}
\newcommand\R{\mathbf{R}}
\newcommand\C{\mathbf{C}}
\newcommand\N{\mathbf{N}}
\newcommand{\Exp}[0]{\mathbb{E}}
\newcommand{\radem}[0]{\varepsilon}
\newcommand{\Rad}[0]{\operatorname{Rad}}
\numberwithin{equation}{section}
  \let\c@subsection\c@equation
\theoremstyle{plain}
\newtheorem{theorem}[equation]{Theorem}
\newtheorem{proposition}[equation]{Proposition}
\newtheorem{corollary}[equation]{Corollary}
\newtheorem{lemma}[equation]{Lemma}
\theoremstyle{definition}
\theoremstyle{remark}
\newtheorem{remark}[equation]{Remark}
\newtheorem{example}[equation]{Example}
\title[Stability of the $H^{\infty}$-calculus]{Stability in $p$ of the $H^{\infty}$-calculus \\ of first-order systems in $L^p$}
\author[Hyt\"onen]{Tuomas Hyt\"onen}
\address{Department of Mathematics and Statistics, University of Helsinki, Gustaf H\"allstr\"omin katu 2b, FI-00014 Helsinki, Finland}
\email{tuomas.hytonen@helsinki.fi}
\author[McIntosh]{Alan McIntosh} 
\address{Centre for Mathematics and its Applications, Australian National University, Canberra ACT 0200, Australia}
\email{Alan.McIntosh@anu.edu.au}
\date{\today}
\subjclass[2010]{47A60 (Primary); 42B37, 47F05 (Secondary)}
\keywords{Differential operators with bounded measurable coefficients, extrapolation of norm inequalities, $R$-bisectorial operators, coercivity conditions}
\begin{document}

\begin{abstract}
We study certain differential operators of the form $AD$ arising from a first-order approach to the Kato square root problem. We show that if such operators are $R$-bisectorial in $L^p$, they remain $R$-bisectorial in $L^q$ for all $q$ close to $p$. In combination with our earlier results with Portal, which required such $R$-bisectoriality in different $L^q$ spaces to start with,  this shows that the $R$-bisectoriality in just one $L^p$ actually  implies bounded $H^{\infty}$-calculus in $L^q$ for all $q$ close to $p$. We adapt the approach to related second-order results developed by Auscher, Hofmann and Martell, and also employ abstract extrapolation theorems due to Kalton and Mitrea.
\end{abstract}

\maketitle

\section{Introduction}

Recall that an operator $\mathcal{A}$ on a Banach space $X$ is called \emph{bisectorial} of angle $\omega\in[0,\pi/2)$ if its spectrum satisfies
\begin{equation*}
  \sigma(\mathcal{A})\subseteq S_{\omega}:=\Sigma_{\omega}\cup(-\Sigma_{\omega}),\qquad
  \Sigma_{\omega}:=\{z\in\C;\abs{\arg(z)}\leq\omega\},
\end{equation*}
and there holds
\begin{equation*}
  \Norm{(I+\tau\mathcal{A})^{-1}}{\bddlin(X)}\leq C_{\omega'}\qquad\forall\ \tau\notin S_{\omega'},\quad\forall\ \omega'>\omega.
\end{equation*}
For such an operator, one can define a calculus of bounded operators by formal substitution to the Cauchy integral formula,
\begin{equation*}
\begin{split}
  \psi(\mathcal{A}) &:=\frac{1}{2\pi i}\int_{\partial S_{\omega'}}\psi(\lambda)(I-\frac{1}{\lambda}\mathcal{A})^{-1}\frac{\ud\lambda}{\lambda}, \\
  \psi &\in H^{\infty}_0(S_{\omega''}):=\{\phi\in H^{\infty}(S_{\omega''}):\phi\in O\big((\frac{z}{1+z^2})^{\alpha}\big),\alpha>0\},\qquad\omega''>\omega'>\omega,
\end{split}
\end{equation*}
and it is of interest whether this calculus may be boundedly extended to all $\psi\in H^{\infty}(S_{\omega''})$ (bounded holomorphic functions in the interior of $S_{\omega''}$). If this is the case for all $\omega''>\omega$, then $\mathcal{A}$ is said to have a \emph{bounded $H^{\infty}$-calculus} of angle $\omega$.

A bisectorial operator $\mathcal{A}$ is called \emph{$R$-bisectorial} of angle $\omega$ if all sequences of operators $T_k$ taken from the (bounded) collection of resolvents $(I+\tau\mathcal{A})^{-1}$, $\tau\notin S_{\omega'}$, for any $\omega'>\omega$, satisfy the stronger \emph{$R$-boundedness} condition
\begin{equation*}
  \Exp\BNorm{\sum_k\radem_k T_k u_k}{X}\leq C  \Exp\BNorm{\sum_k\radem_k u_k}{X},
\end{equation*}
where the $\radem_k$ are random signs and $\Exp$ is the corresponding expectation. This is a condition of boundedness on the space $\Rad X$ of all sequences $(u_k)_k\subset X$ for which the series $\sum_k\radem_k u_k$ converges almost surely and equipped with the norm on the right of the previous displayed line.

In our two papers with Portal \cite{HMP1,HMP2}, we have used this  $R$-bisectoriality estimate to characterize the boundedness of the $H^{\infty}$-calculus of certain first-order differential operators, which are related to a first-order approach to the Kato square root problem for second-order divergence-form operators as developed by Axelsson, Keith and McIntosh \cite{AKM} and streamlined in \cite{AAM}, where further applications of this formalism to boundary value problems are also described. The main results of \cite{HMP1,HMP2} state that the considered operators
have a bounded $H^{\infty}$-calculus of angle $\omega$ in $X=L^p(\R^n;\C^N)$ for all $p$ in an open interval, if and only if they are $R$-bisectorial of angle $\omega$ in $L^p$ for all $p$ in the same interval. Notice that in this situation $\Rad X\eqsim L^p(\R^n;\Rad\C^N)\eqsim L^p(\R^n;\ell^2(\C^N))$, so that the abstract $R$-bisectoriality condition reduces to a classical-style square function estimate.

The most immediate deficiency of the mentioned results in \cite{HMP1,HMP2} is that they only work for an open interval of exponents, rather than a fixed one. The aim of this paper is to remove this deficiency by showing that, in fact, the $R$-bisectoriality of the specific operators of interest already self-improves from one $L^p$ to an open interval of $L^q$ spaces, thus making the previous results applicable with the a priori weaker assumption. In doing so, we follow the line of investigation of extrapolating $L^p$ inequalities, which was started by Blunck and Kunstmann \cite{BluKu} (with some prehistory going back to Duong and Robinson \cite{DuoRo}) and elaborated in the context of operators related to Kato's problem by Auscher, Hofmann and Martell \cite{Auscher,HofMa}. However, it seems that the existing extrapolation results by these authors have always dealt with the related second-order operators only. There are also more recent first-order results by Ajiev~\cite{Ajiev}, but the scopes of his and our assumptions and conclusions are not immediately comparable.

Although we are able to adapt the main lines of the approach of \cite{HofMa}, with an intermediate application of a result from \cite{Auscher}, to our situation, this was not completely obvious. One key difference of the first-order operators compared to the second-order counterparts is the existence of non-trivial null-spaces which have to be dealt with. While, in principle, the strategy just consists of separating the treatment on the complementary subspaces of the range and the kernel, finding the right places in the proof to make the splitting in a technically correct way required some trial and error.

We also adopt the generality of the related first-order papers \cite{Ajiev,AAM,AKM,HMP2} by considering coefficient matrices $A$, which are only required to satisfy a coercivity condition $\Norm{Au}{p}\gtrsim\Norm{u}{p}$ in the $L^p$ sense and only on the (in general not dense) range of a relevant differential operator $D$. This is in contrast to the usual uniform ellipticity assumptions made in the second-order treatments like \cite{Auscher,HofMa}. We have included a comparison of the different coercivity conditions in an appendix where we also show, for all constant-coefficient matrices $A$, that the $L^p$ coercivity on the range of $D$ is equivalent to the uniform pointwise coercivity on the range of the Fourier multiplier symbol of~$D$. For a general $A$, however, our necessary and sufficient pointwise conditions do not meet. See \cite[Section~2]{AAMarkiv} and \cite[Section~1]{AHMT} for a related discussion of $L^2$ and pointwise accretivity conditions for second and higher order divergence-form operators.

Let us finally note that, for quite a while, we also had in mind another potential strategy towards eliminating the need of assumptions concerning an interval of values of $p$. In fact, the proofs of \cite{HMP1,HMP2} contain just one particular obstruction against the possibility of working with a fixed $p$, namely, an $L^p$ version of Carleson's embedding theorem. In contrast to the classical $L^2$ result, it needs an assumption for $p+\eps$ to get a conclusion for the given $p>2$. Our alternative hope was to eliminate this $\eps$ from the $L^p$ Carleson inequality but we now know that this cannot be achieved, at least not on the level of the general embedding result. A counterexample for $p=4$ was first constructed and shown to one of us by Michael Lacey (personal communication, September 2009), and it can be extended to all $p>2$. The details are presented in a more general context elsewhere \cite{HytKe}.

In the following section, we give a precise formulation of our extrapolation result sketched above, which is then proven in the rest of the paper. The coercivity conditions, as mentioned, are further discussed in an appendix.

\section{Set-up and main results}\label{sec:setup}

We work in the Lebesgue spaces $L^q:=L^q(\R^n;\C^N)$ with $q\in(1,\infty)$. 

\subsection{The operator $D$}
We denote by $D$ a first order constant-coefficient differential operator
\begin{equation*}\tag{D0}\label{eq:D0}
  D=-i\sum_{j=1}^n\hat{D}_j\partial_j,\qquad\hat{D}_j\in\bddlin(\C^N),
\end{equation*}
acting on $\C^N$-valued Schwartz distributions. It can also be viewed as the Fourier multiplier operator with symbol $\hat{D}(\xi)=\sum_{j=1}^n\hat{D}_j\xi_j$. This induces an unbounded operator on each $L^q$ with domain $\domain_q(D):=\{u\in L^q;Du\in L^q\}$. The symbol is required to satisfy the following properties:
\begin{equation*}\tag{D1}\label{eq:D1}
  \kappa\abs{\xi}\abs{e}\leq\abs{\hat{D}(\xi)e}\qquad\forall\ \xi\in\R^n,\quad\forall\ e\in\range(\hat{D}(\xi)),
\end{equation*}
where $\range(\hat{D}(\xi))$ stands for the range of $\hat{D}(\xi)$, and
\begin{equation*}\tag{D2}\label{eq:D2}
  \sigma(\hat{D}(\xi))\subseteq S_{\omega}:=\Sigma_{\omega}\cup(-\Sigma_{\omega}),\qquad
  \Sigma_{\omega}:=\{z\in\C;\abs{\arg(z)}\leq\omega\},
\end{equation*}
where $\kappa>0$ and $\omega\in[0,\pi/2)$ are some constants. 

Under these assumptions, it has been shown in \cite[Lemma~4.1]{HMP2} that
\begin{equation}\label{eq:equivCondD}
  \sigma(\hat{D}(\xi))\subseteq [S_{\omega}\cap A(\kappa\abs{\xi},M\abs{\xi})]\cup\{0\},\qquad
  \C^N=\kernel(\hat{D}(\xi))\oplus\range(\hat{D}(\xi)),
\end{equation}
where $A(a,b):=\{z\in\C;\ a\leq\abs{z}\leq b\}$ and $M:=\sup_{\abs{\xi}=1}\abs{\hat{D}(\xi)}<\infty$, and $\kernel(\hat{D}(\xi))$ stands for the kernel of $\hat{D}(\xi)$. This condition, conversely, implies the original assumptions on $\hat{D}(\xi)$ (possibly with a different $\kappa$). In particular, \eqref{eq:equivCondD} gives that the spectrum of $\hat{D}(\xi)$ restricted to its range satisfies $\sigma\big(\hat{D}(\xi)|\range(\hat{D}(\xi))\big)\subseteq A(\kappa\abs{\xi},M\abs{\xi})$, and then, by Cramer's rule, that the inverse of this restricted operator has norm bounded by $C\abs{\xi}^{-1}$, which is a reformulation of \eqref{eq:D1}.  Moreover, the condition \eqref{eq:equivCondD} is equivalent to the corresponding statement for $\hat{D}(\xi)^*$, and hence everything we say about $D$ is also true for $D^*$. This fact was implicitly used in \cite{HMP2}.

It was proven in \cite[Theorem~5.1]{HMP2} that $D$ is bisectorial and has a bounded $H^{\infty}$-calculus of angle $\omega$ in $L^q$ for all $q\in(1,\infty)$. Consequently, there is a direct sum decomposition $L^q=\kernel_q(D)\oplus\overline{\range_q(D)}$ into the kernel $\kernel_q(D):=\{u\in\domain_q(D);Du=0\}$ and the closure of the range $\range_q(D)=\{Du;u\in\domain_q(D)\}$. The two components are complemented in $L^p$ with the common projections $\proj_D^0$ and $\proj_D^1$, where for instance the former can be represented by
\begin{equation*}
  \proj_D^0 u=\lim_{\tau\to\infty}(I+\tau D)^{-1}u,
\end{equation*}
where the limit is along $\tau\notin S_{\omega'}$ with $\omega'>\omega$. Hence, the kernels $\kernel_q(D)$ form an interpolation scale for $q\in(1,\infty)$, and the same is true for the spaces $\overline{\range_q(D)}$.

Moreover \cite[Proposition~5.2]{HMP2}, $D$ satisfies the property, for all $q\in(1,\infty)$, that
\begin{equation*}
  \Norm{\nabla u}{q}\lesssim\Norm{Du}{q}\qquad\forall\ u\in\domain_q(D)\cap\overline{\range_q(D)}\subseteq W^{1,q}.
\end{equation*}

\subsection{The operators $A$ and $AD$}
Let
\begin{equation*}\tag{A}\label{eq:A}
  A\in L^{\infty}(\R^n;\bddlin(\C^N))
\end{equation*}
be a bounded matrix-valued function, frequently identified with the pointwise multiplication operator acting boundedly on all $L^q$. Its adjoint $A^*$ is an operator of the same type.

Our primary interest is in the composition of the two operators just defined. Many operators of interest in partial differential equations arise in this form. In particular, with $A=\begin{pmatrix} I & 0 \\ 0 & A_1 \end{pmatrix}$ and $D=\begin{pmatrix} 0 & -\diverg \\ \nabla & 0 \end{pmatrix}$, we have $(AD)^2=\begin{pmatrix} L & 0 \\ 0 & \tilde{L} \end{pmatrix}$, where $L$ is the second-order divergence form operator $L=-\diverg A_1\nabla$. Proving the boundedness of the $H^{\infty}$-calculus of $AD$ in $L^p$ implies the Kato square root estimate $\Norm{\sqrt{L}u}{p}\eqsim\Norm{\nabla u}{p}$; see \cite[Sec.~2]{AAM} and Corollary~\ref{cor:Kato} below.

An important property of the operators $AD$ is that their resolvents $(I+\tau AD)^{-1}$, as soon as bounded on $L^p$, automatically satisfy the following localized bounds, often called off-diagonal estimates. (The result is stated in \cite[Proposition~5.1]{AAM} for $p=2$, but the same proof works for any $p\in(1,\infty)$.)

\begin{lemma} [Off-diagonal estimates; \cite{AAM}, Prop.~5.1]
Let $A$ and $D$ be as in \eqref{eq:A}, \eqref{eq:D0}, \eqref{eq:D1} and \eqref{eq:D2}. There is an $\alpha>0$ with the following property. Suppose that $\Norm{(I+\tau AD)^{-1}}{p\to p}\lesssim 1$. If $E$ and $F$ are disjoint Borel sets and $u\in L^p$ is supported on $F$, then
\begin{equation*}
  \Norm{1_E(I+\tau AD)^{-1}u}{p}\lesssim e^{-\alpha\dist(E,F)/\abs{\tau}}\Norm{u}{p}
    \lesssim\Big(\frac{\dist(E,F)}{\abs{\tau}}\Big)^{-K}\Norm{u}{p}
\end{equation*}
for any $K>0$.
\end{lemma}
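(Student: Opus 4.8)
The plan is to establish the off-diagonal (Gaffney--Davies) estimates by the classical exponential-perturbation trick of Davies, adapted to $L^p$. The starting point is the resolvent identity $R_\tau := (I+\tau AD)^{-1}$ together with the hypothesis $\Norm{R_\tau}{p\to p}\lesssim 1$. First I would fix a Lipschitz function $\phi:\R^n\to\R$ with $\Norm{\nabla\phi}{\infty}\leq 1$ and, for a parameter $\rho>0$ to be chosen, consider the twisted resolvent $R_\tau^\rho := e^{\rho\phi}R_\tau e^{-\rho\phi}$, which satisfies the perturbed equation $(I+\tau A D_\rho)R_\tau^\rho = I$, where $D_\rho := e^{\rho\phi}D e^{-\rho\phi} = D - i\rho\,\hat D(\nabla\phi)$, the last term being a bounded multiplication-type operator of norm $\lesssim M\rho$ by the definition of $M=\sup_{|\xi|=1}|\hat D(\xi)|$ from \eqref{eq:equivCondD}. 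Hence $I+\tau AD_\rho = (I+\tau AD)\big(I - i\rho\tau\,(I+\tau AD)^{-1}A\,\hat D(\nabla\phi)\big)$, and the second factor is invertible by a Neumann series as soon as $|\rho\tau|\cdot\Norm{R_\tau}{p\to p}\Norm{A}{\infty}M < 1/2$, say; this gives $\Norm{R_\tau^\rho}{p\to p}\lesssim 1$ uniformly for all such $\rho$ and all unit-Lipschitz $\phi$.

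Next I would convert this weighted bound into the stated localized estimate. Given disjoint Borel sets $E,F$ with $d:=\dist(E,F)>0$ and $u$ supported on $F$, choose $\phi(x):=\dist(x,F)$, which is $1$-Lipschitz, vanishes on $F$, and is $\geq d$ on $E$. Then for $v=R_\tau u$ we have $e^{\rho\phi}v = R_\tau^\rho(e^{\rho\phi}u) = R_\tau^\rho u$ on the support level, so $1_E v = 1_E e^{-\rho\phi}R_\tau^\rho u$ and therefore
\begin{equation*}
  \Norm{1_E R_\tau u}{p}\leq e^{-\rho d}\Norm{R_\tau^\rho u}{p}\lesssim e^{-\rho d}\Norm{u}{p}.
\end{equation*}
Finally I would optimize: the Neumann-series step forces $\rho\leq c/|\tau|$ for a fixed small constant $c$ depending only on $\Norm{R_\tau}{p\to p}$, $\Norm{A}{\infty}$ and $M$; taking $\rho = c/|\tau|$ yields $\Norm{1_E R_\tau u}{p}\lesssim e^{-\alpha d/|\tau|}\Norm{u}{p}$ with $\alpha := c$, which is the first claimed inequality. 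The second (polynomial) inequality is then immediate from the elementary bound $e^{-t}\leq C_K t^{-K}$ for $t>0$.

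A couple of technical points need care. One must check the commutator computation $e^{\rho\phi}De^{-\rho\phi} = D - i\rho\,\hat D(\nabla\phi)$ rigorously at the level of the domains $\domain_p(D)$; since $\phi$ is Lipschitz, $\nabla\phi\in L^\infty$ and this is a bounded perturbation of $D$, so the identities above hold on $\domain_p(D)=\domain_p(D_\rho)$ and extend by density to all of $L^p$ after composing with the resolvents. One should also note that $e^{\pm\rho\phi}$ is not a bounded operator on $L^p$ when $\phi$ is unbounded, so the manipulations with $R_\tau^\rho$ must be read as identities between bounded operators obtained algebraically (the conjugated resolvent is defined directly as the inverse of $I+\tau AD_\rho$), and only at the very end is $e^{\rho\phi}$ applied to the compactly-ish localized pieces $1_E(\cdots)$ and $1_F u$, where it is harmless.

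The main obstacle I expect is bookkeeping rather than conceptual: making sure the Neumann-series inversion of $I - i\rho\tau R_\tau A\hat D(\nabla\phi)$ is uniform in $\phi$ and that the resulting constant $c$ (hence $\alpha$) depends only on the structural constants and on $\Norm{R_\tau}{p\to p}$ — not on $\tau$, $E$, $F$ or $\phi$ — so that the estimate is genuinely a family bound over all $\tau$ with $\Norm{R_\tau}{p\to p}\lesssim 1$. Since this is exactly the $p$-independent version of Davies's argument already carried out for $p=2$ in \cite[Prop.~5.1]{AAM}, and none of the steps used $p=2$ specifically (no Hilbert-space structure, no $L^2$ duality), the adaptation is routine; the lemma statement itself flags this.
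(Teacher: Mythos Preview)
Your argument is correct and is precisely the Davies-type exponential perturbation proof of \cite[Prop.~5.1]{AAM} that the paper invokes without reproving; as the paper notes, no step uses the Hilbert structure, so the adaptation to general $p$ is routine. (Minor slip: the commutator gives $D_\rho = D + i\rho\,\hat D(\nabla\phi)$ rather than $-i\rho$, but this is immaterial to the estimate.)
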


We are now ready for the formulation of the main theorem.

\begin{theorem}\label{thm:extrapRbisec}
Let $A$ and $D$ be as in \eqref{eq:A}, \eqref{eq:D0}, \eqref{eq:D1} and \eqref{eq:D2}. 
Suppose that the following conditions hold for some $p\in(1,\infty)$:
the operator $AD$ is $R$-bisectorial of angle $\omega$ in $L^p$,
the operator $A^*D^*$ is $R$-bisectorial of angle $\omega$ in $L^{p'}$,
and we have the coercivity estimates
\begin{equation}\label{eq:coercivity}
  \Norm{Au}{p}\gtrsim\Norm{u}{p}\quad\forall u\in\range_p(D),\qquad
  \Norm{A^*v}{p'}\gtrsim\Norm{v}{p'}\quad\forall v\in\range_{p'}(D^*).
\end{equation}
Then these conditions remain valid with $p$ replaced by any $q$ in some open interval containing $p$. Hence $AD$ has a bounded $H^{\infty}$-calculus of angle $\omega$ in $L^q$ for all these $q$, in particular for $q=p$.
\end{theorem}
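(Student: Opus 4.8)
The plan is to follow the extrapolation strategy of Blunck--Kunstmann and, more specifically, the approach of Auscher--Hofmann--Martell, but adapted to the first-order setting and, crucially, to the subspace $\overline{\range_q(D)}$ rather than the whole space. First I would reduce matters to the resolvents acting on the range: since $AD$ annihilates $\kernel_q(D)$ and leaves $\overline{\range_q(D)}$ invariant, $R$-bisectoriality of $AD$ in $L^q$ is equivalent to $R$-bisectoriality of the part of $AD$ in $\overline{\range_q(D)}$, and on this subspace the coercivity \eqref{eq:coercivity} makes $A$ invertible, so $AD$ is similar to $DA$ via $A$. This allows one to pass freely between $AD$ and its variants, and to use that the spaces $\overline{\range_q(D)}$ form an interpolation scale with bounded projections $\proj_D^1$ for all $q\in(1,\infty)$.

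Next, I would establish the two halves of the open-interval conclusion separately, one for $q>p$ and one for $q<p$, using duality (the hypotheses on $A^*D^*$ in $L^{p'}$ are exactly what is needed to handle one side, since $(AD)^*$ is similar to $A^*D^*$ up to the reflection in the kernel/range decomposition). For the extrapolation upward, the key tool is a good-$\lambda$ or weak-type $(q,q)$ extrapolation argument à la Auscher: given the $L^p$-boundedness of the resolvent family and the off-diagonal estimates from the Off-diagonal Lemma, together with the $R$-bounded square function estimate in $L^p$ (which is what $R$-bisectoriality gives after the identification $\Rad L^p\eqsim L^p(\R^n;\ell^2)$), one shows that a suitable maximal or square-function operator built from $(I+\tau AD)^{-1}$ is bounded on $L^q$ for $q$ in a neighbourhood of $p$. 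Here I expect to invoke the abstract extrapolation theorems of Kalton--Mitrea to upgrade the scalar $L^q$ bounds to the vector-valued ($\Rad$-valued, i.e.\ $\ell^2$-valued) statement that is genuinely needed, rather than reproving a vector-valued good-$\lambda$ inequality by hand.

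The off-diagonal estimates enter as the replacement for pointwise kernel bounds: they let me localize $(I+\tau AD)^{-1}u$ to dyadic annuli and sum the contributions with polynomial decay of any order, which is the standard mechanism for turning an $L^p\to L^p$ bound plus decay into an $L^q\to L^q$ bound for nearby $q$ via a Calder\'on--Zygmund-type decomposition adapted to the operator. The coercivity condition is used to control $A^{-1}$ on the range at the exponent under consideration; here one must be careful that coercivity is only assumed at the single exponent $p$ (and $p'$), so part of the argument is to propagate it: on $\overline{\range_q(D)}$ for $q$ near $p$, $A$ remains invertible because invertibility of an operator that is bounded on an interpolation scale is an open condition in the exponent. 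I would check that the projections $\proj_D^0,\proj_D^1$ and the similarity transforms are all compatible with the localization, so that the splitting into kernel and range is performed \emph{before} the Calder\'on--Zygmund stopping-time argument rather than after.

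The main obstacle, as the introduction already flags, is precisely the bookkeeping around the non-trivial null-space: the extrapolation machinery of \cite{Auscher,HofMa} is built for operators like the second-order $L$ whose resolvents have no kernel to worry about, whereas here one must run the good-$\lambda$ argument for the \emph{restricted} resolvent on $\overline{\range_q(D)}$, and the Calder\'on--Zygmund decomposition of a function in $\overline{\range_q(D)}$ does not stay in that subspace after truncation. The fix is to apply $\proj_D^1$ at the right moment and absorb the commutator terms using the off-diagonal decay together with the bound $\Norm{\nabla u}{q}\lesssim\Norm{Du}{q}$ on $\domain_q(D)\cap\overline{\range_q(D)}$ recorded in the set-up; getting this technically correct is, I expect, where most of the real work lies. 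Once the $R$-bisectoriality of $AD$ in $L^q$ (and of $A^*D^*$ in $L^{q'}$) is established for $q$ in an open interval about $p$, together with the propagated coercivity, the final claim about the bounded $H^\infty$-calculus of angle $\omega$ follows immediately by quoting the main results of \cite{HMP1,HMP2}, which convert $R$-bisectoriality on an interval into a bounded $H^\infty$-calculus.
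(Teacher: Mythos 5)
Your broad strategy matches the paper's: split into kernel and range, run a Blunck--Kunstmann/Duong--Robinson weak-$(q,q)$ argument on the range part using the off-diagonal estimates and a higher-order approximate identity $I-S_t=\varphi(tAD)$, and obtain the upper extrapolation ($q>p$) by duality from the hypotheses on $A^*D^*$. But there are two genuine gaps in how you propose to execute it.

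First, you misassign the role of the Kalton--Mitrea theorems. These are interpolation-scale stability results, and the paper uses them precisely for extrapolation in the \emph{exponent}: Theorem~2.5 of \cite{KalMit} propagates the lower bound $\Norm{Au}{q}\gtrsim\Norm{u}{q}$ on $\overline{\range_q(D)}$ from $q=p$ to a neighbourhood, and Theorem~2.7 (Šneĭberg) propagates the isomorphism $J_q:\kernel_q(AD)\oplus\overline{\range_q(AD)}\to L^q$, i.e.\ the topological splitting. They do \emph{not} upgrade scalar $L^q$ bounds to $\Rad$-valued ones; that is simply not what those theorems say. In the paper the entire Calderón--Zygmund argument is run directly on $\Rad\C^N$-valued functions --- the weak-type inequality you need is vector-valued from the outset, and the specific form of the operator family $T=(T_k)_k$ is inessential. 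If you rely on Kalton--Mitrea to supply the $\ell^2$-valued statement ``for free,'' the proof has a hole.

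Second --- and this is exactly the point you yourself flag as ``where most of the real work lies'' --- the resolution of the kernel/range conflict with the Calderón--Zygmund decomposition is not the one you propose. You correctly observe that a CZ decomposition of $u\in\overline{\range_q(AD)}$ does not stay inside that subspace, and you suggest applying $\proj_D^1$ afterwards and absorbing commutator terms via off-diagonal decay and $\Norm{\nabla u}{q}\lesssim\Norm{Du}{q}$. That route does not work cleanly: the projections $\proj_D^0,\proj_D^1$ are nonlocal Fourier multipliers, so applying them to the bad pieces destroys exactly the compact support on which the annular localization and off-diagonal summation depend. The paper avoids the problem entirely by decomposing not $u$ but its Sobolev preimage: write $u=ADf$ with $f\in\domain_q(D)\cap\overline{\range_q(D)}\subseteq W^{1,q}$ (density of $\range_q(AD)$ plus $\proj_D^1$ applied \emph{before} the CZ step), and apply Auscher's $\dot W^{1,q}$ Calderón--Zygmund decomposition \cite[Lemma~5.12]{Auscher} to $f$. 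Then $u=ADg+\sum_j ADb^j$ with every piece automatically in $\range(AD)$, the bound $\Norm{\nabla f}{q}\lesssim\Norm{Df}{q}\lesssim\Norm{ADf}{q}=\Norm{u}{q}$ controls sizes, and no commutators arise. Missing this change of variables is the essential gap; without it the argument you sketch does not close.
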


\begin{remark}
(i) If $A$ is invertible on $L^p$ (and then $A^*$ on $L^{p'}$), then the $R$-bisectoriality of $AD$ in $L^p$ implies the $R$-bisectoriality of $A^*D^*=A^*(D^*A^*)(A^*)^{-1}=A^*(AD)^*(A^*)^{-1}$ in $L^{p'}$ by duality and similarity, and so the conditions on the dual operators can be removed.

(ii) More generally, the weaker assumption
\begin{equation*}
  L^{p'}=\kernel_{p'}(A^*D^*)\oplus\overline{\range_{p'}(A^*D^*)}
\end{equation*}
also allows us to remove the conditions on the dual operators because it implies that $A^*D^*$ on $\overline{\range_{p'}(A^*D^*)}$ is similar to $D^*A^*=(AD)^*$ on $\overline{\range_{p'}(D^*A^*)}$. In fact, writing $\tilde{A}^{-*}:\overline{\range_{p'}(A^*D^*)}\to\overline{\range_{p'}(D^*)}=\overline{\range_{p'}(D^*A^*)}$ for the inverse of $A^*:\overline{\range_{p'}(D^*)}\to\overline{\range_{p'}(A^*D^*)}$, we have that  $A^*D^*=A^*(D^*A^*)\tilde{A}^{-*}$ on $\overline{\range_{p'}(A^*D^*)}$. This restricted similarity suffices, since the resolvent bounds of $A^*D^*$ on $\kernel_{p'}(A^*D^*)$ are trivial.

(iii) Only the claim starting with ``Then'' requires proof in Theorem~\ref{thm:extrapRbisec}; the claim starting with ``Hence'' then follows from \cite[Corollary~8.17]{HMP2}. In fact, the mentioned Corollary is stated for operators of the form $DA$ rather than $AD$. But our assumptions are symmetric in $AD$ and $A^*D^*=(DA)^*$, so once we have proven the $R$-bisectoriality of $AD$ in $L^q$ and thus, by symmetry, of $A^*D^*$ in $L^{q'}$, we also have the $R$-bisectoriality of $D^*A^*$ in $L^{q'}$ and $DA$ in $L^q$ by duality. Then \cite[Corollary~8.17]{HMP2} applies directly to give the bounded $H^{\infty}$-calculus of $D^*A^*$ and $DA$, and we get back to $AD$ and $A^*D^*$ by duality again.
\end{remark}

As a matter of fact, one can somewhat weaken the $R$-bisectoriality assumptions in Theorem~\ref{thm:extrapRbisec}:

\begin{theorem}\label{thm:weakTypeRbisec}
Let $A$ and $D$ be as in \eqref{eq:A}, \eqref{eq:D0}, \eqref{eq:D1} and \eqref{eq:D2}. 
For some $p\in(1,\infty)$, let $AD$ be bisectorial of angle $\omega$ in $L^p$, let $A^*D^*$ be bisectorial of angle $\omega$ in $L^{p'}$, and assume the coercivity \eqref{eq:coercivity} and the following weak-type $R$-bisectoriality inequalities:
\begin{equation*}
  \Big\{x\in\R^n;\Exp\Babs{\sum_k\radem_k(I+\tau_kAD)^{-1}u_k}>\alpha\Big\}
  \leq \frac{C}{\alpha^p}\int_{\R^n}\Exp\Babs{\sum_k\radem_k u_k}^p\ud x,
\end{equation*}
and let further $A^*D^*$ have similar bounds in the dual space $L^{p'}$:
\begin{equation*}
  \Big\{x\in\R^n;\Exp\Babs{\sum_k\radem_k(I+\tau_kA^*D^*)^{-1}v_k}>\alpha\Big\}
  \leq \frac{C}{\alpha^{p'}}\int_{\R^n}\Exp\Babs{\sum_k\radem_k v_k}^{p'}\ud x,
\end{equation*}
both uniformly for all $\tau_k\notin S_{\omega'}$ ($\omega'>\omega$) and $\alpha>0$. Then the conclusions of Theorem~\ref{thm:extrapRbisec} still hold.
\end{theorem}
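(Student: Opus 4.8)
The plan is to deduce Theorem~\ref{thm:weakTypeRbisec} from the proof of Theorem~\ref{thm:extrapRbisec} by observing that the latter proof never really uses the full strength of $R$-bisectoriality, only the weak-type consequence stated here. Concretely, the strategy is not to re-prove everything from scratch but to identify the single point in the argument where strong $R$-bisectoriality enters and check that weak-type suffices there, together with an interpolation/self-improvement step that upgrades weak-type back to strong-type on an open interval.

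The key steps, in order, are as follows. First, following the Blunck--Kunstmann / Auscher--Hofmann--Martell philosophy as adapted in the proof of Theorem~\ref{thm:extrapRbisec}, one shows that the resolvent family $(I+\tau AD)^{-1}$ (more precisely its $\Rad$-valued version acting on $\Rad L^p$) satisfies the hypotheses of a Calder\'on--Zygmund-type extrapolation theorem: this requires, besides the off-diagonal estimates from Lemma~1 (which hold as soon as the resolvents are $L^p$-bounded, hence under mere bisectoriality), an $L^p$-boundedness input for the ``good'' part of a Calder\'on--Zygmund decomposition. It is precisely at this good-part estimate that one invokes boundedness of the resolvents; but the Calder\'on--Zygmund machinery (in the form of the abstract extrapolation results of Kalton--Mitrea and Auscher used in the main proof) only needs a \emph{weak-type} $(p,p)$ bound on $\Rad L^p$ as the starting point, since it produces weak-type $(q,q)$ for $q<p$ (and, dually, for $q>p$ from the adjoint bound). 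Thus one replaces, throughout the proof of Theorem~\ref{thm:extrapRbisec}, the strong-type $R$-bisectoriality of $AD$ in $L^p$ and of $A^*D^*$ in $L^{p'}$ by the two displayed weak-type inequalities, and of the $\Rad L^p$-bounded resolvents one retains only what survives: off-diagonal bounds and weak-type $(p,p)$. Second, one runs the extrapolation to obtain weak-type $(q,q)$ bounds for the $\Rad$-valued resolvents of $AD$ for $q$ in a left-neighbourhood of $p$ and, via the dual inequality for $A^*D^*$, for $q$ in a right-neighbourhood; by the Marcinkiewicz interpolation theorem (valid in the $\Rad L^q$ scale, these being $L^q$ of a Banach space) one upgrades these to \emph{strong-type} $R$-bisectoriality in an open interval $(p-\eps,p+\eps)$, with the coercivity \eqref{eq:coercivity} extrapolated in the same range by the usual open-ended self-improvement of reverse-H\"older/coercivity inequalities. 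Finally, having recovered genuine $R$-bisectoriality of $AD$ in some $L^{q_0}$ with $q_0\neq p$ (and of $A^*D^*$ in $L^{q_0'}$, and the coercivity at $q_0$), one simply applies Theorem~\ref{thm:extrapRbisec} at the exponent $q_0$ to conclude the stated conclusions, which in particular cover $q=p$.

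I expect the main obstacle to be bookkeeping rather than a new idea: one must verify that the weak-type hypothesis is genuinely enough at \emph{every} place the main proof uses the resolvent bounds, including the places where the range/kernel splitting $L^q=\kernel_q(D)\oplus\overline{\range_q(D)}$ is performed and where projections $\proj_D^0,\proj_D^1$ are applied, since composing a weak-type operator with a bounded projection is harmless but composing two weak-type operators is not --- so one has to be sure no such illegal composition occurs, or that it is always mediated by a strong-type (e.g.\ off-diagonal or a genuinely bounded, even $H^\infty$-calculus) factor. A secondary technical point is the interpolation step: one needs Marcinkiewicz interpolation between weak-type estimates on the vector-valued spaces $\Rad L^q\eqsim L^q(\R^n;\Rad\C^N)$, which is standard (Marcinkiewicz works for functions valued in any Banach space), but one should confirm that the endpoint exponents produced by the extrapolation straddle $p$ in the right way to make the interpolation self-improving rather than vacuous. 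Modulo these checks, the theorem follows with essentially no extra work beyond the proof of Theorem~\ref{thm:extrapRbisec}.
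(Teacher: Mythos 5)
Your proposal matches the paper's own approach in spirit and almost in detail: the argument in Sections 3--5 of the paper is in fact written directly from the weak-type $(p,p)$ hypothesis together with mere bisectoriality (for the kernel/range splitting and the off-diagonal estimates), and Theorem~\ref{thm:extrapRbisec} is just the special case obtained by noting that strong $R$-bisectoriality implies the weak-type inequality. Your observation that the good-part estimate $|\{|TADg|>\alpha\}|\lesssim\alpha^{-p}\|ADg\|_p^p$ and the dualised treatment of the mollified/bad terms only ever invoke the weak-type bound once (the rest being handled by off-diagonal bounds coming from bisectoriality and duality, and by the Kalton--Mitrea extrapolation of the coercivity and of the topological splitting) is exactly the content of the paper's proof; similarly, the lower extrapolation to $q\in(p_0,p)$ and the dual/upper extrapolation to $q\in(p,p_1)$, followed by Marcinkiewicz interpolation to recover strong $R$-bisectoriality at $p$, is precisely Section~5.

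The one place where your write-up is off is the final step. Once strong $R$-bisectoriality, the coercivity, and the dual statements have been recovered for all $q$ in an open interval around $p$, you should not re-apply Theorem~\ref{thm:extrapRbisec} at a single new exponent $q_0\neq p$: the interval that Theorem~\ref{thm:extrapRbisec} would produce around $q_0$ is not quantified and need not contain $p$, so the phrase ``which in particular cover $q=p$'' is unjustified as stated. The correct (and simpler) conclusion is the one in Remark~2.4(iii): having established the ``Then'' conclusions (the assumptions valid for all $q$ in an open interval containing $p$), the ``Hence'' conclusion --- bounded $H^\infty$-calculus on that interval, in particular at $p$ --- follows directly from \cite[Corollary~8.17]{HMP2}, without any detour through Theorem~\ref{thm:extrapRbisec}. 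Also, a small bookkeeping remark: the Kalton--Mitrea results are used to extrapolate the coercivity and the direct-sum splitting, not the weak-type resolvent bounds; only the Auscher--Blunck--Kunstmann--Hofmann--Martell Calder\'on--Zygmund machinery touches the weak-type hypothesis. Your phrasing conflates these two strands slightly, though this does not affect the validity of the argument.
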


We note that the majority of the results in \cite{HMP2}, and all the results in \cite{HMP1}, are actually formulated somewhat differently from the $AD$ (or $DA$) formalism of \cite{AAM} employed here, treating instead operators of the form $\Gamma+B_1\rGamma B_2$ (with $\Gamma$ and $\rGamma$ differentiation, $B_1$ and $B_2$ multiplication operators) introduced in~\cite{AKM}. However, one can usually transfer results back and forth between the two frameworks (cf. \cite[Section 10.1]{AAM} and the proofs of \cite[Corollaries 8.17, 9.3]{HMP2}), and it now seems that the $AD$ operators are conceptually simpler and at least equally useful.

Here is a consequence for the Kato square root problem for systems \cite{AHMT}:

\begin{corollary}\label{cor:Kato}
Let $A_1\in L^{\infty}(\R^n;\bddlin(\C^m\otimes\C^n))$ satisfy
\begin{equation}\label{eq:L2coer}
  \int_{\R^n}\nabla\bar{u}(x)\cdot A_1(x)\nabla u(x)\ud x\gtrsim\Norm{\nabla u}{2}^2,
\end{equation}
for all $u\in W^{1,2}(\R^n;\C^m)$. Then $AD$, with $\displaystyle A=\begin{pmatrix} I & 0 \\ 0 & A_1 \end{pmatrix}$ and $\displaystyle D=\begin{pmatrix} 0 & -\diverg \\ \nabla & 0 
\end{pmatrix}$, has a bounded $H^{\infty}(S_{\omega})$-calculus in $L^p(\R^n;\C^m\oplus[\C^m\otimes\C^n])$ for all $p\in(p_0,p_1)$, for some $p_0<2<p_1$ and $\omega\in(0,\pi/2)$. In particular, $L=-\diverg A_1\nabla$ satisfies $\Norm{\sqrt{L}u}{p}\eqsim\Norm{\nabla u}{p}$ for all $u\in W^{1,p}(\R^n;\C^m)$ and $p\in(p_0,p_1)$.
\end{corollary}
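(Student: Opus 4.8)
The strategy is to verify that the operators $A$ and $D$ in the statement fit the hypotheses of Theorem~\ref{thm:extrapRbisec} (or its weaker variant, Theorem~\ref{thm:weakTypeRbisec}) at the single exponent $p=2$, and then to invoke the theorem to propagate all conclusions to an open interval $(p_0,p_1)\ni 2$. First I would check that $D=\begin{pmatrix} 0 & -\diverg \\ \nabla & 0\end{pmatrix}$ satisfies \eqref{eq:D0}, \eqref{eq:D1} and \eqref{eq:D2}: this is a first-order constant-coefficient operator whose Fourier symbol is $\hat{D}(\xi)=\begin{pmatrix} 0 & -i\xi\cdot \\ i\xi & 0\end{pmatrix}$ (up to normalization), which is self-adjoint, so $\omega=0$; one computes $\range(\hat D(\xi))=\{(a,b):\ b\parallel\xi\}$ and on this range $|\hat D(\xi)e|\gtrsim|\xi||e|$, giving \eqref{eq:D1} with the kernel being the curl-free/constant part. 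This is the standard Hodge-type setup of \cite{AAM,AKM}, so I would simply cite it. Similarly $A=\begin{pmatrix} I & 0 \\ 0 & A_1\end{pmatrix}$ is plainly in $L^\infty(\R^n;\bddlin(\C^N))$ with $N=m+mn$, so \eqref{eq:A} holds.

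Next I would establish the $L^2$ hypotheses. The coercivity \eqref{eq:coercivity} at $p=2$ follows from the G\r{a}rding-type inequality \eqref{eq:L2coer}: on $\range_2(D)$ the first block is the identity, and on the second block $\range_2(D)$ consists of gradient fields $\nabla u$, for which $\Re\int \nabla\bar u\cdot A_1\nabla u\gtrsim\|\nabla u\|_2^2$ forces $\|A_1\nabla u\|_2\gtrsim\|\nabla u\|_2$; the same argument applied to $A_1^*$ (whose sesquilinear form has the same real part) gives the dual coercivity on $\range_{2}(D^*)=\range_2(D)$. For the $R$-bisectoriality of $AD$ in $L^2$: since $L^2$ is a Hilbert space, $R$-bisectoriality is equivalent to ordinary bisectoriality, and the bisectoriality (indeed the bounded $H^\infty$-calculus) of $AD$ in $L^2$ under exactly the accretivity hypothesis \eqref{eq:L2coer} is one of the central results of the first-order approach to the Kato problem --- this is the Axelsson--Keith--McIntosh theorem \cite{AKM}, reproved in \cite{AAM}. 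The same theorem applied with $A_1$ replaced by $A_1^*$ (equivalently, passing to adjoints) gives the $R$-bisectoriality of $A^*D^*$ in $L^{2'}=L^2$. Thus all hypotheses of Theorem~\ref{thm:extrapRbisec} are met with $p=2$.

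Applying Theorem~\ref{thm:extrapRbisec} then yields an open interval $(p_0,p_1)$ containing $2$ on which $AD$ has a bounded $H^\infty(S_\omega)$-calculus in $L^p(\R^n;\C^m\oplus[\C^m\otimes\C^n])$, for some $\omega\in[0,\pi/2)$; one may take $\omega>0$ harmlessly. Finally, the Kato estimate follows from the $H^\infty$-calculus exactly as indicated in the excerpt and in \cite[Sec.~2]{AAM}: with this choice of $A$ and $D$ one has $(AD)^2=\begin{pmatrix} L & 0 \\ 0 & \tilde L\end{pmatrix}$ with $L=-\diverg A_1\nabla$, so the bounded holomorphic functional calculus produces $\sqrt{L}=[(AD)^2]^{1/2}$ restricted to the first block, and applying it to the function $z\mapsto z/[z^2]^{1/2}=\sign(\Re z)$ (adapted to the bisector) gives $\|\sqrt L\,u\|_p\eqsim\|\,|AD|\,(u,0)\|_p\eqsim\|(u,0)\|_{\text{graph}}$; matching the second component $\nabla u$ against $\|\nabla u\|_p$ then yields $\|\sqrt L u\|_p\eqsim\|\nabla u\|_p$ for $u\in W^{1,p}$. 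The one genuinely nontrivial input is the $L^2$ bisectoriality/functional-calculus result of \cite{AKM}; everything else is bookkeeping — matching the block structure, checking that \eqref{eq:L2coer} is precisely the coercivity needed, and transferring between $AD$ and $L$ via the square. The main obstacle in writing this up cleanly is being careful that the coercivity \eqref{eq:coercivity} is only required on $\range_p(D)$ (not all of $L^p$), which is exactly what \eqref{eq:L2coer} provides and no more — so the argument must not inadvertently demand full ellipticity of $A_1$.
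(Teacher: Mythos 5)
Your proposal is correct and follows essentially the same route as the paper's proof: verify the hypotheses of Theorem~\ref{thm:extrapRbisec} at $p=2$ by observing that \eqref{eq:L2coer} gives \eqref{eq:coercivity}, invoke \cite{AAM,AKM} for the $L^2$ bisectoriality of $AD$ and $A^*D^*$, note that bisectoriality and $R$-bisectoriality coincide on a Hilbert space, and then apply the extrapolation theorem. You supply more detail than the paper (verifying \eqref{eq:D1}--\eqref{eq:D2} for this $D$, deriving the Kato estimate via $(AD)^2$ and the sign function), but these are exactly the points the paper delegates to the cited references, so the arguments agree.
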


\begin{proof}
It is immediate that \eqref{eq:L2coer} implies \eqref{eq:coercivity} with $p=2$. It has been shown in \cite{AAM} (cf.~\cite{AKM}) that these conditions imply the bisectoriality (and in fact the bounded $H^{\infty}(S_{\omega})$-calculus) of $AD$ and $A^*D^*$ in $L^2(\R^n;\C^m\oplus[\C^m\otimes\C^n])$. In a Hilbert space, bisectoriality coincides with $R$-bisectoriality. Hence all the assumptions of Theorem~\ref{thm:extrapRbisec} are verified, and the mentioned theorem implies the asserted conclusion.
\end{proof}

The estimate $\Norm{\sqrt{L}u}{p}\eqsim\Norm{\nabla u}{p}$ for $p$ in a neighbourhood of $2$ (with more precise information on the values $p_0$ and $p_1$) was shown by Auscher \cite{Auscher} in the case of scalar-valued functions, i.e., $m=1$ in Corollary~\ref{cor:Kato}.

\section{Beginning of the proof}

\subsection{Preparation for the proof}
Interestingly, the specific form of the inequality that we want to extrapolate plays very little role in the proof. What matters is that we are concerned with the boundedness of some operators
\begin{equation*}
  T= (T_k)_k:u=(u_k)_k\mapsto(T_k u_k)_k
\end{equation*}
on $L^p(\R^n;\Rad(\C^N))$, or from $L^p(\R^n;\Rad(\C^N))$ to $L^{p,\infty}(\R^n;\Rad(\C^N))$, where the components $T_k$ are in the $H^{\infty}$-calculus of $AD$. In fact, $T_k=(I+\tau_k AD)^{-1}$ in the situation at hand.

The inequality assumed in Theorem~\ref{thm:weakTypeRbisec} says that
\begin{equation*}
  \abs{\{x\in\R^n;\abs{Tu}>\alpha\}}\leq\frac{C}{\alpha^p}\Norm{u}{p}^p,
\end{equation*}
where we write simply $\abs{\ }$ for the norm of $\Rad\C^N$ and $\Norm{\ }{p}$ for that of $L^p(\R^n;\Rad\C^N)$. It will suffice to prove a similar weak-type inequality for all $q$ in an open neighbourhood of $p$, for then the asserted strong-type inequalities in the same range follow from Marcinkiewicz' interpolation theorem.

Note that we are not assuming the resolvents $(I+\tau_k AD)^{-1}$ to act a priori boundedly on $L^q$, and so the expression $Tu$ need not be well-defined for all $u\in L^q(\R^n;\Rad\C^N)$. Of course, we will first consider $u$ in a dense subspace consisting of functions in $L^p\cap L^q$, but the choice of the subspace now needs slightly more care than in the usual Calder\'on--Zygmund theory, and we return to this issue in a moment.

\subsection{Abstract operator extrapolation}
The first coercivity condition in \eqref{eq:coercivity} says that the mapping $A:\overline{\range_p(D)}\to L^p$ is bounded from below. By an extrapolation result of Kalton and Mitrea \cite[Theorem~2.5]{KalMit}, using that both $\overline{\range_q(D)}$ and $L^q$ form interpolation scales, it remains bounded from below for all $q$ in some open interval $(p_0,p_1)$ containing $p$, i.e.,
\begin{equation*}
  \Norm{Au}{q}\gtrsim\Norm{u}{q}\quad\forall u\in\overline{\range_q(D)},\quad\forall q\in(p_0,p_1).
\end{equation*}
This in turn implies that $\kernel_q(AD)=\kernel_q(D)$ and $\overline{\range_q(AD)}=A\overline{\range_q(D)}$, and so even the spaces $\kernel_q(AD)$ (by equality to $\kernel_q(D)$) and $\overline{\range_q(AD)}$ (by isomorphism to $\overline{\range_q(D)}$) form interpolation scales for $q\in(p_0,p_1)$.

The assumed bisectoriality implies the topological direct sum splitting $L^p=\kernel_p(AD)\oplus\overline{\range_p(AD)}$, with the associated projections denoted by $\proj^0_{AD}$ and $\proj^1_{AD}$. (Recall also the corresponding splitting $L^p=\kernel_p(D)\oplus\overline{\range_p(D)}$, with projections $\proj^0_{D}$ and $\proj^1_{D}$.) An equivalent formulation of this topological splitting is the isomorphism of the mapping
\begin{equation*}
  J_p:\kernel_p(AD)\oplus\overline{\range_p(AD)}\to L^p,\quad
  (u_0,u_1)\mapsto u_0+u_1.
\end{equation*}
The similarly defined mapping $J_q$ is obviously bounded for every $q\in(p_0,p_1)$ and of course it coincides with $J_p$ on the intersection of their domains. Since the involved spaces form interpolation scales again, another extrapolation result (see \cite[Theorem~2.7]{KalMit}; the particular case needed here actually goes back to {\v{S}}ne{\u\i}berg \cite{Sneiberg}) shows that $J_q$ remain an isomorphism for all $q$ in a possibly smaller open interval containing $p$. By adjusting the numbers $p_0$ and $p_1$ if necessary, we keep denoting this interval by $(p_0,p_1)$. Thus
\begin{equation*}
  L^q=\kernel_q(AD)\oplus\overline{\range_q(AD)},\quad\forall q\in(p_0,p_1).
\end{equation*}

\subsection{Splitting the proof into kernel and range}
We first concentrate on the exponents $q\in(p_0,p)$, and want to establish the weak-type $R$-bisectoriality estimate there. The topological splitting just established allows the separate treatment of $u_k\in\kernel_q(AD)$ and $u_k\in\overline{\range_q(AD)}$. We now choose appropriate subspaces of these two spaces, where the operators of interest are well-defined, and can be eventually extended by density to all $L^q$.

Note that $\kernel_q(AD)\cap L^p\subseteq\kernel_p(AD)$ and similarly with $q$ and $p$ interchanged; hence
\begin{equation*}
  L^q\cap L^p = [\kernel_q(AD)\cap\kernel_p(AD)]\oplus[\overline{\range_q(AD)}\cap\overline{\range_p(AD)}].
\end{equation*}
Since this space is dense in $L^q$, the two components on the right are dense in $\kernel_q(AD)$ and $\overline{\range_q(AD)}$.

For the kernel, we simply take the subspace
\begin{equation*}
   \kernel_q(AD)\cap\kernel_p(AD)\subseteq\kernel_q(AD),
\end{equation*}
and observe that the estimate of interest is a triviality there, since $(I+\tau_k AD)^{-1}u_k=u_k$ for $u_k\in\kernel_p(AD)$.

By definition, $\range_q(AD)$ is dense in $\overline{\range_q(AD)}$. Elements of this space are of the form $ADf$, where $f\in\domain_q(D)$, and replacing $f$ by $\proj_D^1f$, we may assume that $f\in\domain_q(D)\cap\overline{\range_q(D)}\subseteq W^{1,q}$. Let us then approximate $f$ in the $W^{1,q}$ norm by an element $\tilde{f}\in W^{1,q}\cap W^{1,p}\subseteq\domain_q(D)\cap\domain_p(D)$. Then $AD\tilde{f}\in\range_q(AD)\cap\range_p(AD)$ is close to $AD f$ in the $L^q$ norm. The key estimate will then be proven for the functions
\begin{equation*}
  u_k=ADf_k\in \range_q(AD)\cap\range_p(AD)\subseteq \overline{\range_q(AD)},
\end{equation*}
where we can further assume that
\begin{equation*}
  f_k\in\domain_q(D)\cap\overline{\range_q(D)}\cap\domain_p(D)\cap\overline{\range_p(D)}\subseteq
  W^{1,q}\cap W^{1,p},
\end{equation*}
since the spaces $\overline{\range_q(D)}$ are complemented in the respective $L^q$ by the common projection $\proj^1_D$.

\subsection{Calder\'on--Zygmund decomposition}

We make use of the Calder\'on--Zygmund decomposition for Sobolev functions due to Auscher \cite[Lemma 5.12]{Auscher} and then follow the procedure of Blunck and Kunstmann~\cite{BluKu}, or perhaps more precisely its variant in Hofmann and Martell~\cite{HofMa}. For $\alpha>0$ and $f\in \dot{W}^{1,q}(\R^n;\Rad\C^N)$, as we have, Auscher's result provides a representation
\begin{equation*}
  f=g+\sum_j b^j,
\end{equation*}
where
\begin{equation*}
  \Norm{\nabla g}{\infty}\lesssim\alpha,\qquad  b^j\in W^{1,q}_0(Q_j;\Rad\C^N),\qquad\fint_{Q_j}\abs{\nabla b^j}^q\ud x\lesssim\alpha^q
\end{equation*}
and the $Q_j$ are cubes with
\begin{equation*}
  \sum_j\abs{Q_j}\lesssim\alpha^{-q}\int_{\R^n}\abs{\nabla f}^q\ud x,\qquad\sum_j 1_{Q_j}\lesssim 1.
\end{equation*}

As a consequence of these estimates, it follows that (using the bounded overlap of the cubes $Q_j$ in the first step)
\begin{equation*}
 \BNorm{\sum_j\nabla b^j}{q}\lesssim\Big(\sum_j\Norm{\nabla b^j}{q}^q\Big)^{1/q}\lesssim\Norm{\nabla f}{q}
  \lesssim\Norm{D f}{q}\lesssim\Norm{AD f}{q}=\Norm{u}{q};
\end{equation*}
hence also $\displaystyle\Norm{\nabla g}{q}=\BNorm{\nabla f-\sum_j\nabla b^j}{q}\lesssim\Norm{u}{q}$,
and in combination with the $L^{\infty}$ bound for $\nabla g$ we have
\begin{equation*}
 \Norm{\nabla g}{p}^p\lesssim\alpha^{p-q}\Norm{u}{q}^q,\qquad
 \qquad p\in(q,\infty).
\end{equation*}

The decomposition of $f$ immediately leads to a decomposition of $u=(u_k)_k=(ADf_k)_k=ADf$,
\begin{equation*}
  u=ADg+ADb=ADg+\sum_j ADb^j.
\end{equation*}
Then (recalling the abbreviation $T=((I+\tau_k DA)^{-1})_k$)
\begin{equation*}
  \abs{\{\abs{Tu}>3\alpha\}}
  \leq\abs{\{\abs{TADg}>\alpha\}}+\abs{\{\abs{TADb}>2\alpha\}}
\end{equation*}
As usual, the good part is estimated by the boundedness properties already known to us in $L^p$:
\begin{equation*}
  \abs{\{\abs{TADg}>\alpha\}}
  \lesssim\frac{1}{\alpha^p}\Norm{ADg}{p}^p
  \lesssim\frac{1}{\alpha^p}\Norm{\nabla g}{p}^p
  \lesssim\frac{1}{\alpha^p}\alpha^{p-q}\Norm{u}{q}^q
  =\frac{1}{\alpha^q}\Norm{u}{q}^q.
\end{equation*}

\section{Analysis of the bad part}

We turn to the estimation of the bad part, where the Blunck--Kunstmann procedure~\cite{BluKu} deviates from the classical Calder\'on--Zygmund theory. The idea of the following further decomposition goes back to Duong and Robinson~\cite{DuoRo}:
\begin{equation*}
  TAD b =\sum_j TAD b^j
  =\sum_j T(I-S_{\ell(Q_j)})ADb^j+\sum_j TS_{\ell(Q_j)}ADb^j,
\end{equation*}
where $S_t$ is  an approximation of the identity adapted to the operator $AD$. See also~\cite{DuoMc}.

To ensure a high degree of approximation, which plays a role in certain estimates below, we follow~\cite{BluKu} to introduce the auxiliary function
\begin{equation*}
  \varphi(z):=\sum_{m=0}^M\binom{M}{m}(-1)^m(1+imz)^{-1}\in H^{\infty}(S_{\omega'})\quad\forall\omega'<\pi/2.
\end{equation*}
This satisfies $\abs{\varphi(z)}\lesssim\min\{\abs{z}^M,1\}$ for all $z\in S_{\omega'}$, where the uniform bound is clear and the decay at zero follows from
\begin{equation*}
  \varphi^{(k)}(0)
  =k!\sum_{m=0}^M\binom{M}{m}(-1)^m(-im)^k
  =k!\big(-iz\frac{d}{dz}\big)^k(1-z)^M\Big|_{z=1}=0\qquad\forall k=0,\ldots,M-1.
\end{equation*}
Then we define $S_t$ by
\begin{equation*}
  I-S_t:=\varphi(tAD)=\sum_{m=0}^M\binom{M}{m}(-1)^m(I+itmAD)^{-1}.
\end{equation*}

Blunck and Kunstmann \cite{BluKu} formulated an abstract version of such higher order approximate identities, and applied it to questions of $H^{\infty}$-calculus with $e^{-tmL}$ in place of $(I+itmAD)^{-1}$ above. These semigroup-based mollifiers were also exploited by Auscher, Hofmann and Martell \cite{Auscher,HofMa}; variants involving the resolvent, as here, appear in Ajiev~\cite{Ajiev}.

Let further $E^*:=\R^n\setminus\bigcup_j 2Q_j$. Then
\begin{equation*}
\begin{split}
  \abs{\{\abs{TADb}>2\alpha\}}
  \leq\sum_j\abs{2Q_j}&+\Babs{\Big\{x\in E^*;\Babs{\sum_j T(I-S_{\ell(Q_j)})ADb^j}>\alpha\Big\}} \\
   & +\Babs{\Big\{x\in\R^n;\Babs{\sum_j TS_{\ell(Q_j)}ADb^j}>\alpha\Big\}},
\end{split}
\end{equation*}
and
\begin{equation*}
  \sum_j\abs{2Q_j}\lesssim\alpha^{-q}\Norm{\nabla f}{q}^q\lesssim\alpha^{-q}\Norm{u}{q}^q
\end{equation*}
by the properties of Auscher's Calder\'on--Zygmund decomposition, so we are left with estimating the size of the two remaining sets.

In their treatment, we will follow the approach of Hofmann and Martell \cite{HofMa}. Its perhaps most distinctive difference compared to, say, \cite{Auscher} is pushing the operators $T$, $S_t$ and $AD$ to the dual side, which effectively decouples the use of the assumptions on these operators from the use of the properties of the Calder\'on--Zygmund decomposition. This conceptual simplification was helpful to us for getting the details of the proof correctly organized.

\subsection{The mollified term with $TS_{t}$}
As $S_{\ell(Q_j)}$ is a linear combination of the resolvents $(I+im\ell(Q_j)AD)^{-1}$, $m=1,\ldots,M$, it suffices to consider just one of them. Then, using the assumed weak-type $L^p$-inequality for $T$,
\begin{equation*}
  \Babs{\Big\{\Babs{\sum_j T(I+im\ell(Q_j)AD)^{-1}AD b^j}>\alpha\Big\}}
  \leq\frac{1}{\alpha^p}\BNorm{\sum_j (I+im\ell(Q_j)AD)^{-1}AD b^j}{p}^p.
\end{equation*}
We estimate this expression by dualising with an $h\in L^{p'}(\R^n;\Rad(\C^N))$. Let further
\begin{equation*}
  S(j,0):=2Q_j,\qquad S(j,r):= 2^{r+1}Q_j\setminus 2^r Q_j,\quad r=1,2,\ldots,
\end{equation*}
and $h_{j,r}:=1_{S(j,r)}h$. Then
\begin{equation*}
\begin{split}
  &\Babs{\Bpair{\sum_j (I+im\ell(Q_j)AD)^{-1}AD b^j}{h}}
  \leq\sum_j\sum_{r=0}^{\infty}\abs{\pair{(I+im\ell(Q_j)AD)^{-1}AD b^j}{h_{j,r}}} \\
  &=\sum_j\sum_{r=0}^{\infty}\abs{\pair{b^j}{(AD)^*(I+im\ell(Q_j)(AD)^*)^{-1}h_{j,r}}} \\
  &=:\sum_j\sum_{r=0}^{\infty}\abs{\pair{b^j}{\tilde{h}_{j,r}}} 
     \leq\sum_j\sum_{r=0}^{\infty}\Norm{b^j}{p}\Norm{1_{Q_j}\tilde{h}_{j,r}}{p'},
\end{split}
\end{equation*}
where the last step used the fact that $b^j$ is supported on $Q_j$.

Next, recalling that $b^j\in W^{1,q}_0(Q_j;\Rad\C^N)$,
\begin{equation*}
  \Norm{b^j}{p}\lesssim\abs{Q_j}^{1/p-1/q+1/n}\Norm{\nabla b^j}{q}\lesssim\alpha\abs{Q_j}^{1/p+1/n}
\end{equation*}
by Sobolev's inequality and properties of the Calder\'on--Zygmund decomposition. (Here it is required that $1/p-1/q+1/n\geq 0$. If necessary, we replace the lower end-point $p_0$ of our considered interval $(p_0,p)\owns q$ by $\displaystyle\max\{p_0,\frac{pn}{p+n}\}$ to ensure this.) On the other hand,
\begin{equation*}
\begin{split}
  \Norm{1_{Q_j}\tilde{h}_{j,r}}{p'}
  &=\frac{1}{m\ell(Q_j)}\Norm{1_{Q_j}[I-(I+im\ell(Q_j)(AD)^*)^{-1}]h_{j,r}}{p'} \\
  &\lesssim\frac{1}{\ell(Q_j)}\Big(\frac{\ell(Q_j)}{2^r\ell(Q_j)}\Big)^{-K}\Norm{h_{j,r}}{p'} \\
  &\lesssim\abs{Q_j}^{-1/n}2^{-rK}(2^{rn}\abs{Q_j})^{1/p'}\Big(\fint_{2^{r+1}Q_j}\abs{h}^{p'}\ud x\Big)^{1/p'}
\end{split}
\end{equation*}
by the off-diagonal estimates satisfied by the resolvents $(I+\tau AD)^{-1}$ in $L^p$, which are easily seen to dualise.

Substituting back and taking $K>n/p'$, it follows that
\begin{equation*}
\begin{split}
  \sum_j\sum_{r=0}^{\infty}\abs{\pair{b^j}{\tilde{h}_{j,r}}} 
  &\lesssim\sum_j\sum_{r=0}^{\infty}\alpha\abs{Q_j}2^{r(n/p'-K)}\Big(\fint_{2^{r+1}Q_j}\abs{h}^{p'}\ud x\Big)^{1/p'} \\
  &\lesssim\sum_j\alpha\abs{Q_j}\essinf_{x\in Q_j} (M\abs{h}^{p'})^{1/p'}(x) \\
  &\lesssim\alpha\int_{\bigcup_j Q_j}(M\abs{h}^{p'})^{1/p'}\ud x\lesssim\alpha\Babs{\bigcup_j Q_j}^{1/p}\Norm{\abs{h}^{p'}}{1}^{1/p'}
\end{split}
\end{equation*}
by Kolmogorov's lemma (see e.g.~\cite{Meyer}, Ch.~VII, Lemme~10) and the weak-type $(1,1)$ inequality for the Hardy--Littlewood maximal operator in the last step. Taking the supremum over $\Norm{h}{p'}=\Norm{\abs{h}^{p'}}{1}^{1/p'}\leq 1$, and recalling the size of the Calder\'on--Zygmund cubes $Q_j$, it has been shown that
\begin{equation*}
  \Babs{\Big\{x\in\R^n;\Babs{\sum_j TS_{\ell(Q_j)}ADb^j}>\alpha\Big\}}
  \lesssim\frac{1}{\alpha^p}\Big(\alpha\Babs{\bigcup_j Q_j}^{1/p}\Big)^p\lesssim\alpha^{-q}\Norm{u}{q}^q.
\end{equation*}

\subsection{The remaining term with $T(I-S_t)$}
It remains to estimate
\begin{equation*}
\begin{split}
  \Babs{\Big\{x\in E^*;\Babs{\sum_j T(I-S_{\ell(Q_j)})ADb^j}>\alpha\Big\}}^{1/p}
  &\leq\frac{1}{\alpha}\BNorm{1_{E^*}\sum_j T(I-S_{\ell(Q_j)})ADb^j}{p} \\
  &=\sup\frac{1}{\alpha}\Babs{\Bpair{\sum_j T(I-S_{\ell(Q_j)})ADb^j}{h}},
\end{split}
\end{equation*}
where the supremum is over all $h\in L^{p'}(E^*;\Rad\C^N)$ with $\Norm{h}{p'}\leq 1$. As before, the pairing can be written as
\begin{equation*}
  \sum_j\sum_{r=1}^{\infty}\pair{b_j}{1_{Q_j}\tilde{h}_{j,r}},\qquad
  \tilde{h}_{j,r}:=(AD)^*(I-S_{\ell(Q_j)}^*)T^*h_{j,r},
\end{equation*}
where $h_{j,r}=1_{S(j,r)}h$ has the same meaning as earlier. Notice, however, that the summation can now begin from $r=1$, since $h_{j,0}=1_{2Q_j}h=0$ by the restriction of the support of $h$ on $E^*$ only.

Estimating $b_j$ as before, this leads to
\begin{equation*}
\begin{split}
  \sum_j\sum_{r=1}^{\infty}\abs{\pair{b_j}{\tilde{h}_{j,r}}}
  &\leq\sum_j\sum_{r=1}^{\infty}\Norm{b_j}{p}\Norm{1_{Q_j}\tilde{h}_{j,r}}{p'} \\
  &\lesssim\sum_j\sum_{r=1}^{\infty}\alpha\abs{Q_j}^{1/p+1/n}\times
     \Norm{1_{Q_j}(AD)^*(I-S_{\ell(Q_j)}^*)T^*h_{j,r}}{p'}.
\end{split}
\end{equation*}
The operators $(AD)^*(I-S_{\ell(Q_j)}^*)T^*$ (or their components; recall that we are working on sequence-valued functions) are in the $H^{\infty}$-calculus of $(AD)^*$,
\begin{equation*}
  (AD)^*(I-S_{\ell(Q_j)}^*)T^*_k
  =\frac{1}{2\pi i}\int_{\partial S_{\omega'}} z\varphi(\ell(Q_j)z)(1+\tau_k z)^{-1}(I-\frac{1}{z}(AD)^*)^{-1}\frac{\ud z}{z}.
\end{equation*}
The resolvents $(I-z^{-1}(AD)^*)^{-1}$ satisfy off-diagonal estimates on $L^{p'}$ by the assumed bisectoriality and duality, so it straightforwardly follows (now using the bound $\abs{\varphi(z)}\lesssim\max(\abs{z}^M,1)$ and taking $M>K>1$) by estimating the integral in the two parts $\abs{z}\leq\ell(Q_j)^{-1}$ and $\abs{z}>\ell(Q_j)^{-1}$, that
\begin{equation*}
   \Norm{1_{Q_j}(AD)^*(I-S_{\ell(Q_j)}^*)T^*h_{j,r}}{p'}
   \lesssim\ell(Q_j)^{-1}2^{-rK}\Norm{h_{j,r}}{p'}.
\end{equation*}
This is exactly of the same form as in the previous part of the estimate, and so we conclude just like there.

\section{Conclusion of the proof}

\subsection{Conclusion of the lower extrapolation}
We have shown that
\begin{equation*}
    \Big\{x\in\R^n;\Exp\Babs{\sum_k\radem_k(I+\tau_kAD)^{-1}u_k}>\alpha\Big\}
  \leq \frac{C}{\alpha^q}\int_{\R^n}\Exp\Babs{\sum_k\radem_k u_k}^q\ud x
\end{equation*}
for all $q\in(p_0,p)$, for some $p_0<p$. Interpolating this weak-type inequality at two different points, we deduce that
\begin{equation*}
  \Exp\BNorm{\sum_k\radem_k(I+\tau_kAD)^{-1}u_k}{q}\lesssim\Exp\BNorm{\sum_k\radem_k u_k}{q},\qquad q\in(p_0,p);
\end{equation*}
thus $AD$ is also $R$-bisectorial in $L^q$ for all these $q$.

\subsection{Upper extrapolation}
We turn to the question of $R$-bisectoriality of $AD$ for some $q>p$. First observe that the operator $A^*D^*$ in $L^{p'}$ satisfies assumptions exactly like those verified by $AD$ in $L^p$. Our lower extrapolation results imply that $A^*D^*$ is $R$-bisectorial in $L^{q'}$ for all $q'\in(p_1',p')$, with some $p_1'<p'$. We would like to show from this, that $D^*A^*$  is $R$-bisectorial in $L^{q'}$ for we would then have by duality, that $AD$ is $R$-bisectorial in $L^{q}$ for all $q\in(p, p_1)$.

By duality from the decomposition related to $AD$, we have
\begin{equation*}
  L^{q'}=\kernel_{q'}(D^*A^*)\oplus\overline{\range_{q'}(D^*A^*)}
\end{equation*}
for $q'\in(p_1',p')$ (possibly adjusting $p_1$), and the resolvent bounds on $\kernel_{q'}(D^*A^*)$ are trivial, so we need to show that 
$D^*A^*$  is $R$-bisectorial in $\overline{\range_{q'}(D^*A^*)}$. Now
$\overline{\range_{q'}(D^*A^*)}= \overline{\range_{q'}(D^*)}$ (as follows from $\kernel_q(AD)=\kernel_q(D)$ by duality) and 
$A^*\overline{\range_{q'}(D^*)}=\overline{\range_{q'}(A^*D^*)}$ (possibly readjusting $p_1$) because
\begin{equation*}
  \Norm{A^*v}{q'}\gtrsim\Norm{v}{q'}\quad\forall v\in\overline{\range_{q'}(D^*)}
\end{equation*} for all $q'\in(p_1',p')$
as follows from 
Kalton--Mitrea extrapolation as before.
That is, $A^*\overline{\range_{q'}(D^*A^*)}=\overline{\range_{q'}(A^*D^*)}$, and so the operator $D^*A^*$ on $\overline{\range_q(D^*A^*)}$ is similar to $A^*D^*$ on $\overline{\range_q(A^*D^*)}$, and hence inherits the same resolvent estimates.

Thus $D^*A^*$ is $R$-bisectorial in $L^{q'}$ for $q'\in(p_1',p')$, and by duality $AD$ is $R$-bisectorial in $L^q$ for $q\in(p,p_1)$.

Interpolating this with the estimate for $q\in(p_0,p)$, we finally obtain $R$-bisectoriality in the original space $L^p$, too.

\appendix

\section{Remarks on the coercivity condition}

In this appendix, we give some necessary and some (other) sufficient conditions for the validity of coercivity inequalities as in \eqref{eq:coercivity}, here reformulated as an estimate for test functions
\begin{equation}\label{eq:pCoercive}
  \Norm{ADu}{p}\gtrsim\Norm{Du}{p},\qquad\forall u\in\mathscr{D}(\R^n;\C^N)
\end{equation}
for some fixed $p\in(1,\infty)$. Here it is convenient to consider a slightly more general situation, where the operators $A$ and $D=-i\sum_{j=1}^n\hat{D}_j\partial_j$ can change dimensions, so that $\hat{D}_j\in\bddlin(\C^N,\C^M)$ and $A\in L^{\infty}(\R^n;\bddlin(\C^M,\C^K))$. Moreover, the following results work for any $D$ of this form; only in the last one do we impose the additional requirement that
\begin{equation}\label{eq:constDim}
  \dim\range(\hat{D}(\xi))= \text{constant} =:r\qquad\forall\xi\in\R^n\setminus\{0\}.
\end{equation}

This is obviously satisfied by the fundamental operator $D=\nabla\otimes$ for which $\hat{D}(\xi)=i\xi\otimes\in\bddlin(\C^N,\C^{nN})$ has range $\xi\otimes\C^N$ of fixed dimension $N$. The condition~\eqref{eq:constDim} also holds for all the operators $D$ considered earlier in the paper, i.e., under the assumptions \eqref{eq:D1} and \eqref{eq:D2} made in Section~\ref{sec:setup}. In fact, the consequent condition \eqref{eq:equivCondD} implies that the projection of $\C^N$ onto $\range(\hat{D}(\xi))$ along $\kernel(\hat{D}(\xi))$ is given by
\begin{equation*}
  \proj_{\hat{D}(\xi)}^1=\int_{\Gamma(\xi)}(\lambda-\hat{D}(\xi))^{-1}\ud\lambda,\quad
  \Gamma(\xi)=\partial[S_{\omega'}\cap A(\frac12\kappa\abs{\xi},2M\abs{\xi})],
\end{equation*}
which depends continuously on $\xi\in\R^n\setminus\{0\}$. Now \eqref{eq:constDim} follows easily by a compactness argument from the observation that $\range(\hat{D}(\xi))$ only depends on $\xi^0=\abs{\xi}^{-1}\xi$ and a simple fact about projections:

\begin{lemma}
If two finite-dimensional projections satisfy $\Norm{P-P'}{}<1$, then their ranges have equal dimension.
\end{lemma}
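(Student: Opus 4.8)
The plan is to prove the contrapositive in a quantitative form: if $P$ and $P'$ are projections on some ambient space with $\Norm{P-P'}{}<1$, then the restriction $P'|_{\range(P)}\colon\range(P)\to\range(P')$ is an isomorphism, which forces $\dim\range(P)=\dim\range(P')$. First I would fix the setting: both $P$ and $P'$ act on a common normed space $V$ (in our application $V=\C^N$), are idempotent, and $\range(P)$, $\range(P')$ are the finite-dimensional subspaces whose dimensions we wish to compare. The key observation is that for $x\in\range(P)$ we have $x=Px$, hence
\begin{equation*}
  x-P'x=Px-P'x=(P-P')x,\qquad\text{so}\qquad \Norm{x-P'x}{}\leq\Norm{P-P'}{}\Norm{x}{}.
\end{equation*}
Since $\Norm{P-P'}{}<1$, this yields $\Norm{P'x}{}\geq(1-\Norm{P-P'}{})\Norm{x}{}$ for all $x\in\range(P)$; in particular $P'|_{\range(P)}$ is injective and bounded below, so it maps $\range(P)$ isomorphically onto a subspace of $\range(P')$, giving $\dim\range(P)\leq\dim\range(P')$.

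Next I would run the symmetric argument with the roles of $P$ and $P'$ interchanged, using $\Norm{P'-P}{}=\Norm{P-P'}{}<1$, to obtain $\dim\range(P')\leq\dim\range(P)$. Combining the two inequalities gives $\dim\range(P)=\dim\range(P')$, which is the assertion. The finite-dimensionality hypothesis is what lets us pass from ``injective and bounded below'' to an actual equality of dimensions; in infinite dimensions one would only get that each range embeds into the other, which is not enough. (Alternatively, one could argue via the invertible operator $I-(P-P')$ and check that it carries $\range(P)$ onto $\range(P')$, but the bounded-below estimate above is the cleanest route and avoids having to verify surjectivity separately.)

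The argument is genuinely short and I do not anticipate a serious obstacle; the only point requiring a moment's care is making sure the estimate $\Norm{x-P'x}{}=\Norm{(P-P')x}{}$ really uses $x\in\range(P)$ (so that $Px=x$) and not merely $x\in V$. Once that is in place the proof is symmetric and self-contained. In our application the hypothesis $\Norm{P-P'}{}<1$ will be supplied by the continuity of $\xi\mapsto\proj^1_{\hat D(\xi)}$ on the unit sphere together with a covering/compactness argument, as indicated just before the lemma, so no further input is needed here.
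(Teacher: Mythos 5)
Your proof is correct and is essentially the same as the paper's: both observe that $P'u = u + (P'-P)u$ for $u\in\range(P)$ and conclude injectivity of $P'|_{\range(P)}$ from $\Norm{P-P'}{}<1$ (you via the explicit bounded-below estimate, the paper via the invertibility of $I+(P'-P)$), then finish by symmetry.
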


\begin{proof}
Let $u\in\range(P)$. Then $P'u=[P+(P'-P)]u=[I+(P'-P)]u$, and $I+(P'-P)$ is invertible. Thus $P':\range(P)\to\range(P')$ is injective, hence $\dim\range(P')\geq\dim\range(P)$. The claim follows by symmetry.
\end{proof}

We now turn to conditions on the symbol $\hat{D}(\xi)$ related to \eqref{eq:pCoercive}.

\begin{proposition}\label{prop:necCond}
Suppose that the coercivity estimate \eqref{eq:pCoercive} holds for some $p\in[1,\infty)$. Then
\begin{equation}\label{eq:necCond}
  \abs{A(x)\hat{D}(\xi)v}\gtrsim\abs{\hat{D}(\xi)v},\qquad
  \forall \xi\in\R^n,\quad\forall v\in\C^N,\quad\text{a.e. }x\in\R^n.
\end{equation}
\end{proposition}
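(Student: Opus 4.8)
The plan is to deduce the pointwise inequality \eqref{eq:necCond} from the integrated estimate \eqref{eq:pCoercive} by a standard localisation and rescaling argument, testing \eqref{eq:pCoercive} against functions that concentrate, in both physical space and frequency, around a prescribed pair $(x_0,\xi_0)$. The obstruction to a naive approach is that $A$ is merely $L^\infty$, so $A(x_0)$ has no pointwise meaning; this is resolved by invoking Lebesgue differentiation, which forces us to work at almost every $x_0$ and is exactly why the conclusion \eqref{eq:necCond} is stated for a.e.\ $x$.

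First I would fix $\xi_0\in\R^n$, $v\in\C^N$, and a Lebesgue point $x_0$ of $A$ (that is, a point where $\fint_{B(x_0,\rho)}\abs{A(x)-A(x_0)}\ud x\to 0$ as $\rho\to 0$; a.e.\ point has this property, and one checks that a.e.\ point is simultaneously a Lebesgue point for all the finitely many matrix entries). Then I would build test functions of the form $u_{\rho,\lambda}(x):=\rho^{n/p}\,\chi\big((x-x_0)/\rho\big)\,e^{\img\lambda\xi_0\cdot x/\rho}\,w$, where $\chi\in\mathscr{D}(\R^n)$ is a fixed bump and $w\in\C^N$ is chosen so that $\hat D(\xi_0)w$ hits the direction we care about — concretely one wants $Du_{\rho,\lambda}$ to be, to leading order as $\lambda\to\infty$, a modulated bump times $(\lambda/\rho)\hat D(\xi_0)w$. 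Applying $D=-\img\sum_j\hat D_j\partial_j$ gives $Du_{\rho,\lambda}=\rho^{n/p}e^{\img\lambda\xi_0\cdot x/\rho}\big[\tfrac{\lambda}{\rho}\chi((x-x_0)/\rho)\hat D(\xi_0)w+\tfrac1\rho(\nabla\chi\cdot\hat D)((x-x_0)/\rho)w\big]$, so after dividing by $\lambda/\rho$ the second term is $O(1/\lambda)$ in $L^p$ and disappears in the limit $\lambda\to\infty$, leaving a normalised bump times $\hat D(\xi_0)w$ for $\Norm{Du_{\rho,\lambda}}{p}$, and likewise $A\,Du_{\rho,\lambda}$ becomes $A(x)$ applied to that same bumped vector.

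Next I would let $\rho\to0$: on the support of the rescaled bump, which shrinks to $x_0$, the coefficient $A(x)$ may be replaced by the constant $A(x_0)$ up to an error controlled by $\fint_{B(x_0,C\rho)}\abs{A-A(x_0)}$, which vanishes by the Lebesgue point property; here the uniform $L^\infty$ bound on $A$ lets one pass the limit through the $L^p$ norm. The upshot is that \eqref{eq:pCoercive} applied to $u_{\rho,\lambda}$ and passed to the double limit (first $\lambda\to\infty$, then $\rho\to0$) becomes $\Norm{\chi}{p}\abs{A(x_0)\hat D(\xi_0)w}\gtrsim\Norm{\chi}{p}\abs{\hat D(\xi_0)w}$, i.e.\ $\abs{A(x_0)\hat D(\xi_0)w}\gtrsim\abs{\hat D(\xi_0)w}$, with the implicit constant the one from \eqref{eq:pCoercive} and in particular independent of $x_0,\xi_0,w$. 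Since this holds for every $\xi_0$, every $w$, and a.e.\ $x_0$, it is exactly \eqref{eq:necCond}. A minor bookkeeping point I would attend to is quantifier order — we need one null set outside which the inequality holds for all $(\xi_0,v)$ — which is handled by first proving it for $(\xi_0,v)$ in a countable dense set and then using continuity of both sides in $(\xi_0,v)$ for fixed $x_0$.

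The main obstacle is making the frequency-concentration limit $\lambda\to\infty$ rigorous while $A$ sits between $D$ and the norm: one cannot simply say "$A$ commutes with the oscillation," because it does not, so the argument must instead exploit that the oscillatory factor $e^{\img\lambda\xi_0\cdot x/\rho}$ has modulus one and is the \emph{same} on both sides of $\Norm{A(\cdot)Du_{\rho,\lambda}}{p}$ and $\Norm{Du_{\rho,\lambda}}{p}$ — so it plays no role in either norm — and that the lower-order term from the product rule is genuinely lower order in the $L^p$ sense. Keeping the two limits in the right order (send $\lambda\to\infty$ for fixed $\rho$ to kill the derivative-of-bump term, only then send $\rho\to0$ to freeze $A$) is what makes both passages clean; reversing them would mix the errors. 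Everything else is routine.
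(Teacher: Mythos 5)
Your proposal is correct and is essentially the same argument as in the paper: both test \eqref{eq:pCoercive} against bump functions modulated by a high frequency in the direction $\xi$, pass to the oscillatory limit so that the product-rule term drops and only $\hat D(\xi)v$ times the bump survives, and then shrink the bump to a Lebesgue point $x_0$ of $A$ to recover the pointwise inequality (the paper combines the amplitude factor that kills the gradient term with the frequency parameter into a single $\eps$, and uses a second parameter $\delta$ for the spatial shrinking, but this is only a cosmetic repackaging of your $(\lambda,\rho)$).
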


\begin{proof}
If $u(x)=\eps\psi(x)e^{ix\cdot\xi/\eps}v$, where $\psi\in\mathscr{D}(\R^n)$, $v\in\C^N$, then
\begin{equation*}
   Du(x)=\sum_{j=1}^n\hat{D}_jv\big(i\xi_j\psi(x)+\eps\partial_j\psi(x)\big)e^{ix\cdot\xi/\eps},
\end{equation*}
and
\begin{equation*}
  \Norm{ADu}{p}=\BNorm{A\sum_{j=1}^n\hat{D}_jv\big(i\xi_j\psi+\eps\partial_j\psi\big)}{p}
  \underset{\eps\to 0}{\longrightarrow}\BNorm{A\sum_{j=1}^n\hat{D}_j\xi_jv\psi}{p}
  =\Norm{A\hat{D}(\xi)v\psi}{p}.
\end{equation*}
Let $\psi(x)=\delta^{-n/p}\phi(\delta^{-1}(x-x_0))$, where $0\leq\phi\in\mathscr{D}(\R^n)$ with $\int\phi^p=1$, and $x_0$ be a Lebesgue point  of $A$. Then
\begin{equation*}
  \Norm{A\hat{D}(\xi)v\psi}{p}
  =\Big(\int_{\R^n}\abs{A(x)\hat{D}(\xi)v}^p\phi^p\big(\frac{x-x_0}{\delta}\big)\frac{\ud x}{\delta^n}\Big)^{1/p}
  \underset{\delta\to 0}{\longrightarrow}\abs{A(x_0)\hat{D}(\xi)v}.
\end{equation*}
Since the same reasoning holds with the identity $I$ in place of $A$, the conclusion follows for all $x\in\R^n$, which are Lebesgue points of $A\in L^{\infty}(\R^n;\bddlin(\C^M,\C^K))$.
\end{proof}

\begin{proposition}\label{prop:suffCond}
Suppose that the pointwise coercivity condition
\begin{equation}\label{eq:suffCond}
  \Babs{A(x)\sum_{j=1}^n\hat{D}_j v_j}\gtrsim\Babs{\sum_{j=1}^n\hat{D}_j v_j},\qquad
  \forall v_1,\ldots,v_n\in\C^N,\quad\text{a.e. }x\in\R^n.
\end{equation}
is satisfied. Then for all $p\in[1,\infty)$,  the coercivity estimate \eqref{eq:pCoercive} holds.
\end{proposition}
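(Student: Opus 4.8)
The plan is to reduce the $L^p$ inequality \eqref{eq:pCoercive} to the pointwise hypothesis \eqref{eq:suffCond} by freezing $x$ and choosing the test vectors $v_j$ to be the partial derivatives of $u$. First I would pin down the precise meaning of \eqref{eq:suffCond}: there is a constant $c>0$ and a null set $\mathcal{N}\subset\R^n$ such that
\[
  \Babs{A(x)\sum_{j=1}^n\hat{D}_j v_j}\geq c\,\Babs{\sum_{j=1}^n\hat{D}_j v_j}
  \qquad\forall\,x\in\R^n\setminus\mathcal{N},\quad\forall\,v_1,\dots,v_n\in\C^N.
\]
If the hypothesis is only granted for each fixed tuple $(v_1,\dots,v_n)$ outside a tuple-dependent null set, one first upgrades it to this uniform-in-$v$ form: pick a countable dense subset of $(\C^N)^n$, take the (still null) union of the corresponding exceptional sets, and use that both sides of the inequality are continuous in $(v_1,\dots,v_n)$.

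Next, fix $u\in\mathscr{D}(\R^n;\C^N)$. Since $\partial_j u$ is again a test function, it is defined at every point, and pointwise
\[
  Du(x)=-i\sum_{j=1}^n\hat{D}_j\partial_j u(x)=\sum_{j=1}^n\hat{D}_j\big(-i\,\partial_j u(x)\big).
\]
Thus for every $x\in\R^n\setminus\mathcal{N}$ we may apply \eqref{eq:suffCond} with $v_j:=-i\,\partial_j u(x)\in\C^N$ to get $\abs{A(x)Du(x)}\geq c\,\abs{Du(x)}$. Raising this to the $p$-th power and integrating over $\R^n$ (the null set $\mathcal{N}$ being irrelevant) yields $\Norm{ADu}{p}^p\geq c^p\Norm{Du}{p}^p$, which is exactly \eqref{eq:pCoercive}, with the same constant $c$ for every $p\in[1,\infty)$.

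\textbf{Main obstacle.} I expect essentially no obstacle here: no Fourier-side analysis, localization, or off-diagonal estimates are needed for this (sufficiency) direction, in contrast with the necessity statement in Proposition~\ref{prop:necCond}. The only point requiring a line of care is the quantifier order in \eqref{eq:suffCond} — ensuring the exceptional null set can be taken independently of the frozen tuple $(v_1,\dots,v_n)$ — which is dispatched by the density-plus-continuity remark above.
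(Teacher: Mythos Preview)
Your proposal is correct and follows essentially the same route as the paper: apply the pointwise hypothesis with $v_j=\partial_j u(x)$ (up to the harmless factor $-i$), obtain $\abs{A(x)Du(x)}\gtrsim\abs{Du(x)}$ almost everywhere, raise to the $p$th power and integrate. The additional care you take with the uniform-in-$v$ null set is a sensible clarification but not a departure from the paper's argument.
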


\begin{proof}
It suffices to observe that $\abs{A(x)Du(x)}\gtrsim\abs{Du(x)}$ by the assumption applied to $v_j=\partial_j u(x)$, take the $p$th power, and integrate over $x\in\R^n$.
\end{proof}

The simple sufficient condition \eqref{eq:suffCond} is not necessary, as we will see after showing that the weaker necessary condition \eqref{eq:necCond} (which corresponds to vectors $v_j$ of the special form $v_j=\xi_j v$ in \eqref{eq:suffCond}) is also sufficient in the following situation:

\begin{proposition}\label{prop:constCoercive}
Suppose that $D$ satisfies the condition \eqref{eq:constDim} and that $A\in\bddlin(\C^M,\C^K)$ is a constant matrix such that
\begin{equation}\label{eq:constCond}
  \abs{A\hat{D}(\xi)v}\gtrsim\abs{\hat{D}(\xi)v},\qquad
  \forall \xi\in\R^n,\quad\forall v\in\C^N,
\end{equation}
Then for all $p\in(1,\infty)$,  the coercivity estimate \eqref{eq:pCoercive} holds.
\end{proposition}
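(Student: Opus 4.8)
The plan is to reduce the $L^p$ coercivity estimate \eqref{eq:pCoercive} to an $L^p$-boundedness statement for a Fourier multiplier, and then invoke Mihlin--H\"ormander multiplier theory. Under \eqref{eq:constDim}, the projection $\proj^1_{\hat{D}(\xi)}$ onto $\range(\hat{D}(\xi))$ along $\kernel(\hat{D}(\xi))$ is a smooth, homogeneous-of-degree-zero function of $\xi \in \R^n \setminus \{0\}$, as noted in the discussion preceding the proposition; it is in particular a Mihlin multiplier, so the associated operator $\proj^1_D$ is bounded on every $L^p(\R^n;\C^N)$, $p \in (1,\infty)$. Writing $v = Du \in \range_p(D)$ for $u \in \mathscr{D}(\R^n;\C^N)$, the goal \eqref{eq:pCoercive} reads $\Norm{Av}{p} \gtrsim \Norm{v}{p}$ for all such $v$, and by density for all $v \in \overline{\range_p(D)}$.

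The key step is to construct a bounded \emph{left inverse} for the multiplication operator $A$ restricted to $\overline{\range_p(D)}$. Since $A \in \bddlin(\C^M,\C^K)$ is a constant matrix satisfying \eqref{eq:constCond}, for each fixed $\xi \ne 0$ the operator $A$ is injective and bounded below on the fixed $r$-dimensional subspace $\range(\hat{D}(\xi))$; hence there is a matrix $B(\xi) \in \bddlin(\C^K,\C^M)$ with $B(\xi)A = \id$ on $\range(\hat{D}(\xi))$ and $\Norm{B(\xi)}{} \lesssim 1$ uniformly in $\xi$. The natural choice is the composition of the orthogonal projection onto $A\,\range(\hat{D}(\xi))$ with the inverse of $A\colon \range(\hat{D}(\xi)) \to A\,\range(\hat{D}(\xi))$, followed by nothing further; concretely one can take $B(\xi) = \proj^1_{\hat{D}(\xi)} \big(A^*A\big|_{\range(\hat{D}(\xi))}\big)^{-1} A^* \proj_{A\range(\hat D(\xi))}$, or more simply $B(\xi)$ built from $\proj^1_{\hat{D}(\xi)}$ and a pseudo-inverse of $A$. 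Because $\xi \mapsto \proj^1_{\hat{D}(\xi)}$ is smooth and homogeneous of degree $0$ on $\R^n \setminus \{0\}$, and $A$ is constant, all the finite-dimensional linear-algebra operations (taking ranges, projections, inverting on a subspace of constant dimension, by Cramer's rule) depend smoothly on $\xi$; thus $B(\xi)$ is itself a smooth, degree-zero symbol, i.e.\ a Mihlin multiplier. One should double-check the lower bound in \eqref{eq:constCond} is indeed uniform: since both sides are homogeneous of degree $1$ in $\xi$ and continuous in $(\xi^0, v)$, and $\hat D(\xi)v$ ranges over a compact set of directions, the implied constant is uniform over $\abs{\xi} = 1$.

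It then follows that the operator $\mathcal{B}$ with symbol $B(\xi)$ is bounded on $L^p(\R^n;\C^M) \to L^p(\R^n;\C^N)$ for all $p \in (1,\infty)$, and by construction $\mathcal{B}A = \proj^1_D$, which acts as the identity on $\overline{\range_p(D)}$. Hence for $v \in \overline{\range_p(D)}$,
\begin{equation*}
  \Norm{v}{p} = \Norm{\mathcal{B}Av}{p} \leq \Norm{\mathcal{B}}{p \to p}\Norm{Av}{p} \lesssim \Norm{Av}{p},
\end{equation*}
which is exactly \eqref{eq:pCoercive}. The main obstacle is verifying the smoothness of $B(\xi)$ down to the level of a Mihlin bound --- i.e.\ that, despite $A$ possibly not being injective or surjective on all of $\C^M$, the restriction to the moving subspace $\range(\hat D(\xi))$ of constant dimension admits a left inverse depending smoothly and homogeneously on $\xi$; this is where \eqref{eq:constDim} is essential, guaranteeing no jump in rank, and where one combines the already-established continuity of $\proj^1_{\hat{D}(\xi)}$ with Cramer's rule on a well-chosen minor. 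One final routine point: $\mathscr{D}(\R^n;\C^N)$ is dense in $L^p$, but $D\,\mathscr{D}$ need not be dense in $\overline{\range_p(D)}$; however we only need the inequality for $v$ of the form $Du$, $u \in \mathscr{D}$, which is precisely the content of \eqref{eq:pCoercive}, so no density argument beyond applying the bounded operator $\mathcal{B}$ is required.
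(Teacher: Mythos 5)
Your overall strategy is exactly the paper's: build a left inverse $B(\xi)$ for $A$ on $\range(\hat D(\xi))$ that is a degree-zero Mihlin symbol, apply Mihlin's theorem to get an $L^p$-bounded operator $\mathcal{B}$, and conclude from $\mathcal{B}ADu = Du$. The paper's symbol is precisely the pseudo-inverse you describe: $M(\xi)$ is defined by $M(\xi)AE(\xi)=E(\xi)$ and $M(\xi)(I-F(\xi))=0$, with $E(\xi)$ and $F(\xi)$ the orthogonal projections onto $\range(\hat D(\xi))$ and $\range(A\hat D(\xi))$.

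However, the step you flag as ``the main obstacle'' is in fact the entire content of the proof, and you leave it unfilled. Two concrete issues. First, the projection $\proj^1_{\hat D(\xi)}$ onto range along kernel is not available in the setting of this proposition, where $\hat D(\xi)\in\bddlin(\C^N,\C^M)$ may be rectangular and (D1)--(D2) are not assumed; the contour-integral formula for $\proj^1_{\hat D(\xi)}$ quoted in the preamble to the appendix depends on (D1)--(D2), which are not in force here. One must instead use the orthogonal projection $E(\xi)=\proj_{\range(\hat D(\xi))}$, whose smoothness comes from a different contour integral $E(\xi)=I-\frac{1}{2\pi i}\oint_{|\lambda|=\delta}(\lambda-\hat D(\xi)\hat D(\xi)^*)^{-1}\ud\lambda$, which in turn relies on the constant-rank hypothesis \eqref{eq:constDim} to make the spectral gap at $0$ uniform near the sphere. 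Second, the assertion that ``all the finite-dimensional linear-algebra operations depend smoothly on $\xi$'' is not automatic: taking ranges, projections, or pseudo-inverses of a smooth matrix family need not even be continuous without constant rank, and even with it one must verify smoothness order by order. The paper does this by differentiating the defining relations $MAE=E$, $M(I-F)=0$ and running an induction supported by Remark~\ref{rem:justify}; your alternative via ``Cramer's rule on a well-chosen minor'' would work locally but then needs a patching argument across the sphere, which you do not supply. So the plan is right and matches the paper, but the core smoothness verification is asserted rather than proven.
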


\begin{proof}
Let $E(\xi)$ be the orthogonal projection of $\C^M$ onto $\range(\hat{D}(\xi))=\range(\hat{D}(\xi)\hat{D}(\xi)^*)$. Since this space depends on $\xi\neq 0$ only through the angular component $\xi^0=\abs{\xi}^{-1}\xi$, so does $E(\xi)$. For a positive (in the sense of self-adjoint operators) matrix $A$, the $k$th largest eigenvalue, which coincides with the $k$th singular value, depends continuously on $A$ with respect to the operator norm, and hence the $r$th largest (and thus, by \eqref{eq:constDim}, the smallest positive) eigenvalue $\lambda_r(\xi)$ of $\hat{D}(\xi)\hat{D}(\xi)^*$ depends continuously on $\xi$. Since $\lambda_r(\xi)$ is separated from $0$ for a fixed $\xi\neq 0$, it follows by compactness that $\lambda_r(\xi)\geq 2\delta>0$ for all $\xi$ in a neighbourhood of the unit sphere. This in turn implies that
\begin{equation*}
  E(\xi)=I-\frac{1}{2\pi i}\oint_{\abs{\lambda}=\delta}(\lambda-\hat{D}(\xi)\hat{D}(\xi)^*)^{-1}\ud\lambda
\end{equation*}
for all these $\xi$, and it follows that $E\in C^{\infty}(\R^n\setminus\{0\};\bddlin(\C^M))$. Similarly, the orthogonal projection $F(\xi)$ of $\C^K$ onto $\range(A\hat{D}(\xi))$ defines a function $F\in C^{\infty}(\R^n\setminus\{0\};\bddlin(\C^K))$.

We then define a linear operator $M(\xi):\C^K\to\C^M$ separately on $\range(F(\xi))=A\range(E(\xi))$ and $\range(I-F(\xi))$ as follows:
\begin{equation*}
  M(\xi)AE(\xi):=E(\xi),\qquad M(\xi)(I-F(\xi)):=0.
\end{equation*}
Remark~\ref{rem:justify} below shows that this pointwise definition makes $M$ into a continuous function on $\R^n\setminus\{0\}$.
We can then differentiate these defining equalities, a priori in the sense of distributions. Note that $E$ and $F$ are already known to be smooth, so that their product with a distribution is well-defined. Taking the derivative of order $\alpha\in\N^n$ and moving some terms to the other side, we get
\begin{align*}
  (\partial^{\alpha}M)AE &=E-\sum_{0\neq\beta\leq\alpha}\binom{\alpha}{\beta}(\partial^{\alpha-\beta}M)A\partial^{\beta}E,\\
  (\partial^{\alpha}M)(I-F) &=\sum_{0\neq\beta\leq\alpha}\binom{\alpha}{\beta}(\partial^{\alpha-\beta}M)\partial^{\beta}F.
\end{align*}
Assuming that we already know that all the derivatives of $M$ of order strictly less than $\alpha$ coincide with continuous functions, say, in a neighbourhood of the unit sphere, we obtain the same conclusion for $M$; see again Remark~\ref{rem:justify} for details.

By induction and homogeneity, we have $M\in C^{\infty}(\R^n\setminus\{0\};\bddlin(\C^K,\C^M))$, and such a function automatically satisfies Mihlin's multiplier conditions.
Thus Mihlin's multiplier theorem provides the desired estimate
\begin{equation*}
  \Norm{Du}{p}=\Norm{[M\widehat{ADu}]^{\vee}}{p}\lesssim\Norm{ADu}{p}.\qedhere
\end{equation*}
\end{proof}

\begin{remark}
It was not essential for the argument that the projections $E$ and $F$ are orthogonal, only that they satisfy Mihlin's multiplier estimates.  In the special case when $\hat{D}(\xi)$ and $A\hat{D}(\xi)$ are both bisectorial with $\abs{\hat{D}(\xi)w} \gtrsim \abs{\xi}\abs{w}$ for $w\in
\range(\hat{D}(\xi))$, we could take $E(\xi)$ and $F(\xi)$ as the corresponding spectral projections.
\end{remark}

\begin{remark}\label{rem:justify}
Let $M$ be a function defined pointwise by conditions of the form
\begin{equation*}
  M(\xi)AE(\xi)=G(\xi),\qquad M(\xi)(I-F(\xi))=H(\xi),
\end{equation*}
where $A$, $E$ and $F$ are as in the previous proof, and $G$ and $H$ are continuous functions (say, of $\xi\neq 0$). Then $\abs{M(\xi)AE(\xi)v}=\abs{G(\xi)E(\xi)v}\lesssim\abs{E(\xi)v}\lesssim\abs{AE(\xi)v}$ so $M$ is pointwise well-defined, and $M$ is also continuous. This justifies, first of all, treating the pointwise-defined $M$ in the previous proof as a distribution, and second, identifying its derivatives as continuous functions.

Let us prove this claim. We fix some $\xi\neq 0$, and consider the difference of $M(\xi)$ and $M(\xi')$ for a near-by point $\xi'$. Let $v$ be a fixed vector. By definition, we have that $F(\xi)v=AE(\xi)w$ for some vector $w$, which we may choose to be from $\range(E(\xi))$. Then $\abs{w}=\abs{E(\xi)w}\lesssim\abs{AE(\xi)w}=\abs{F(\xi)v}\leq\abs{v}$. From now on, let us write $M:=M(\xi)$ and $M':=M(\xi')$, with a similar convention for the other relevant functions. We have
\begin{align*}
  M'v
  &=M'Fv+M'(I-F)v=M'AEw+[M'(I-F')v+M'(F'-F)v] \\
  &=[M'AE'w+M'A(E-E')w]+H'v+M'(F'-F)v \\
  &=G'w+M'A(E-E')w+[Hv+(H'-H)v]+M'(F'-F)v,
\end{align*}
and here $Hv=M(I-F)v$ and
\begin{equation*}
  G'w=Gw+(G'-G)w=MAEw+(G'-G)w=MFv+(G'-G)w.
\end{equation*}
Since $Mv=MFv+M(I-F)v$, it follows that
\begin{equation*}
  M'v-Mv=(G'-G)w+M'A(E-E')w+(H'-H)v+M'(F'-F)v,
\end{equation*}
and all summands on the right contain bounded factors multiplied by a difference of a continuous function at $\xi$ and $\xi'$; hence the continuity of $M$ follows.
\end{remark}

\begin{example}[Korn's inequality]\label{ex:Korn}
We illustrate the sufficient condition \eqref{eq:constCond}, and show the non-necessity of \eqref{eq:suffCond}, by deducing the following well-known Korn's inequality from Proposition~\ref{prop:constCoercive}: For $u=(u_i)_{i=1}^n\in\mathscr{D}(\R^n;\C^n)$, there holds
\begin{equation*}
  \sum_{i,j=1}^n\Norm{\partial_i u_j}{p}\lesssim\sum_{i,j=1}^n\Norm{\partial_i u_j+\partial_j u_i}{p}.
\end{equation*}
In fact, this can be written as
\begin{equation*}
  \Norm{Du}{p}\lesssim\Norm{ADu}{p},
\end{equation*}
where $D=\nabla\otimes$ satisfies \eqref{eq:constDim} as already pointed out, and $A:\in\bddlin(\C^n\otimes\C^n)$ is the symmetrizer defined by $(Aw)_{ij}:=w_{ij}+w_{ji}$. The symbolic condition \eqref{eq:constCond} follows at once from
\begin{align*}
  \abs{A\hat{D}(\xi)v}^2 &=\sum_{i,j=1}^n\abs{\xi_i v_j+\xi_j v_i}^2
  =\sum_{i,j=1}^n\big(\abs{\xi_i v_j}^2+\abs{\xi_j v_i}^2+2\xi_i\xi_j\Re(v_j\bar{v}_i)\big) \\
  &=2\abs{\xi}^2\abs{v}^2+2\Re(\xi\cdot\bar{v})(\xi\cdot v)
    =2\big(\abs{\xi}^2\abs{v}^2+\abs{\xi\cdot v}^2\big) \\
  &\geq 2\abs{\xi}^2\abs{v}^2=2\abs{\hat{D}(\xi)v}^2,\qquad\qquad\forall v\in\C^n,
\end{align*}
and hence Korn's inequality is indeed a consequence of Proposition~\ref{prop:constCoercive}.

However, the vectors $\sum_{j=1}^n\hat{D}_jv_j=\sum_{j=1}^n e_j\otimes v_j$ appearing in Proposition~\ref{prop:suffCond} now cover all of $\C^n\otimes\C^n$ as $v_1,\ldots,v_n\in\C^n$. Thus condition \eqref{eq:suffCond} asks for the boundedness from below of $A$ on all of $\C^n\otimes\C^n$, and this clearly cannot hold, since $A$ annihilates all the antisymmetric vectors $(v_{ij})_{i,j=1}^n$ with $v_{ji}=-v_{ij}$.
\end{example}

\subsection*{Acknowledgments}
Much of this research took place during Hyt\"onen's visit to the Centre for Mathematics and its Applications at the Australian National University. He would like to thank the CMA for the hospitality, the financial support and the kind invitation to participate in the 2009 Special Year in Spectral Theory and Operator Theory. Hyt\"onen also gratefully acknowledges support from the Academy of Finland, projects 114374, 130166 and 133264. McIntosh was supported by the CMA and by the Australian Government through the Australian Research Council. Together we thank Pierre Portal for our extensive joint collaboration on the $L^p$ functional calculus of Hodge--Dirac operators and first order systems, as well as Sergey Ajiev, Pascal Auscher and Chema Martell for informative discussions on the topic of this paper.  

\def\cprime{$'$}

\end{document}